\newtheorem{theorem}{Theorem}[section]
\theoremstyle{plain}
\newtheorem{corollary}[theorem]{Corollary}
\newtheorem{lemma}[theorem]{Lemma}
\newtheorem{proposition}[theorem]{Proposition}
\numberwithin{equation}{section}
\begin{document}
\title[Layered solutions above the higher critical exponents]{Positive and nodal single-layered solutions to supercritical elliptic problems
above the higher critical exponents}
\author{M\'{o}nica Clapp}
\address{Instituto de Matem\'{a}ticas, Universidad Nacional Aut\'{o}noma de M\'{e}xico,
Circuito Exterior, C.U., 04510 Mexico City, Mexico}
\email{monica.clapp@im.unam.mx}
\author{Matteo Rizzi}
\address{Instituto de Matem\'{a}ticas, Universidad Nacional Aut\'{o}noma de M\'{e}xico,
Circuito Exterior, C.U., 04510 Mexico City, Mexico}
\email{mrizzi@im.unam.mx}
\thanks{M. Clapp is supported by CONACYT grant 237661 and PAPIIT-DGAPA-UNAM grant
IN104315 (Mexico). M. Rizzi is supported by a postdoctoral fellowship under
CONACYT grant 237661 (Mexico).}
\dedicatory{To Jean Mawhin on his 75th birthday, with great appreciation.}\date{\today}
\maketitle

\begin{abstract}
We study the problem%
\[
-\Delta v+\lambda v=\left\vert v\right\vert ^{p-2}v\text{ in }\Omega
,\text{\qquad}v=0\text{ on $\partial\Omega$},\text{ }%
\]
for $\lambda\in\mathbb{R}$ and supercritical exponents $p,$ in domains of the
form%
\[
\Omega:=\{(y,z)\in\mathbb{R}^{N-m-1}\times\mathbb{R}^{m+1}:(y,\left\vert
z\right\vert )\in\Theta\},
\]
where $m\geq1,$ $N-m\geq3,$ and $\Theta$ is a bounded domain in $\mathbb{R}%
^{N-m}$ whose closure is contained in $\mathbb{R}^{N-m-1}\times(0,\infty)$.
Under some symmetry assumptions on $\Theta$, we show that this problem has
infinitely many solutions for every $\lambda$ in an interval which contains
$[0,\infty)$ and $p>2$ up to some number which is larger than the $(m+1)^{st}$
critical exponent $2_{N,m}^{\ast}:=\frac{2(N-m)}{N-m-2}$. We also exhibit
domains with a shrinking hole, in which there are a positive and a nodal
solution which concentrate on a sphere, developing a single layer that blows
up at an $m$-dimensional sphere contained in the boundary of $\Omega,$ as the
hole shrinks and $p\rightarrow2_{N,m}^{\ast}$ from above. The limit profile of
the positive solution, in the transversal direction to the sphere of
concentration, is a rescaling of the standard bubble, whereas that of the
nodal solution is a rescaling of a nonradial sign-changing solution to the
problem%
\[
-\Delta u=\left\vert u\right\vert ^{2_{n}^{\ast}-2}u,\text{\qquad}u\in
D^{1,2}(\mathbb{R}^{n}),
\]
where $2_{n}^{\ast}:=\frac{2n}{n-2}$ is the critical exponent in dimension
$n.$\medskip

\noindent\textsc{Key words: }Supercritical elliptic problem, positive
solutions, nodal solutions, blow up, higher critical exponents.\qquad

\noindent\textsc{2010 MSC: }35J61, 35B33, 35B44.

\end{abstract}

\baselineskip15pt

\section{Introduction}

We study the existence and concentration behavior of solutions to the problem
\begin{equation}%
\begin{cases}
-\Delta v+\lambda v=\left\vert v\right\vert ^{p-2}v & \text{in $\Omega$},\\
v=0 & \text{on $\partial\Omega$},
\end{cases}
\tag{$\wp_p$}\label{prob}%
\end{equation}
where $\Omega$ is a bounded domain in $\mathbb{R}^{N},$ $\lambda\in
\mathbb{R},$ and $p$ is supercritical, i.e., it is larger than the critical
Sobolev exponent $2_{N}^{\ast}:=\frac{2N}{N-2}$ for $N\geq3.$ We shall
consider domains of the form
\begin{equation}
\Omega:=\{(y,z)\in\mathbb{R}^{N-m-1}\times\mathbb{R}^{m+1}:(y,\left\vert
z\right\vert )\in\Theta\}, \label{Omega}%
\end{equation}
where $m\geq1,$ $N-m\geq3,$ and $\Theta$ is a bounded domain in $\mathbb{R}%
^{N-m}$ whose closure is contained in $\mathbb{R}^{N-m-1}\times(0,\infty)$.

In domains of this type, the true critical exponent is $2_{N,m}^{\ast}%
:=\frac{2(N-m)}{N-m-2},$ which is the critical Sobolev exponent in the
dimension of $\Theta$ and is larger than $2_{N}^{\ast}.$ Indeed, one can
easily verify that the solutions to the problem (\ref{prob}) which are radial
in the variable $z,$ correspond to the solutions of the problem%
\begin{equation}%
\begin{cases}
-\,\text{div}(f(x)u)+\lambda f(x)u=f(x)\left\vert u\right\vert ^{p-2}u &
\text{in }\Theta,\\
u=0 & \text{on $\partial$}\Theta,
\end{cases}
\label{reduced_prob}%
\end{equation}
where $f(x_{1},...,x_{N-m})=x_{N-m}^{m}.$ Standard variational methods show
that this last problem has infinitely many solutions for $p\in(2,2_{N-m}%
^{\ast}),$ hence, also does the problem (\ref{prob}). On the other hand,
Passaseo showed in \cite{p1,p2} that, if $\lambda=0$ and $\Theta$ is a ball
centered on the half-line $\{0\}\times(0,\infty),$ then the problem
(\ref{prob}) does not have a nontrivial solution for $p\geq2_{N-m}^{\ast
}=2_{N,m}^{\ast}.$ The number $2_{N,m}^{\ast}$ has been called the
$(m+1)^{st}$\textit{ critical exponent} in dimension $N.$

The concentration behavior of solutions to the problem (\ref{prob}) for
$\lambda=0$ and $p\in(2,2_{N,m}^{\ast}),$ as $p\rightarrow2_{N,m}^{\ast}$ from
below, has been investigated in several papers. In \cite{dmp}, del Pino, Musso
and Pacard exhibited positive solutions which concentrate and blow up at a
nondegenerate closed geodesic in $\partial\Omega$, as $p$ approaches the
second critical exponent $2_{N,1}^{\ast}$ from below. For any $m\geq1,$
positive and sign-changing solutions in domains of the form (\ref{Omega}) were
constructed in \cite{acp, kp}. These solutions concentrate and blow up at one
or several $m$-dimensional spheres, as $p\rightarrow2_{N,m}^{\ast}$ from
below. In all of these cases the limit profile of the solutions, in the
transversal direction to each sphere of concentration, is a sum of rescalings
of $\pm U$, where%
\[
U(x):=[n(n-2)]^{(n-2)/4}\left(  \frac{1}{1+|x|^{2}}\right)  ^{(n-2)/2}%
\]
is the standard bubble in dimension $n:=N-m$, which is the only positive
solution to the limit problem%
\begin{equation}
-\Delta u=\left\vert u\right\vert ^{2_{n}^{\ast}-2}u,\text{\qquad}u\in
D^{1,2}(\mathbb{R}^{n}), \label{limprob}%
\end{equation}
up to translation and dilation.

It was recently shown in \cite{c} that there exist nonradial sign-changing
solutions to the problem (\ref{limprob}), that do not resemble a sum of
rescaled positive and negative standard bubbles, which occur as limit profiles
for concentration of sign-changing solutions to the problem (\ref{prob}) that
blow up at a single point, as $p\rightarrow2_{N}^{\ast}$ from below. For the
higher critical exponents $2_{N,m}^{\ast}$ with $m\geq1,$ it was shown in
\cite{cf} that for every $\lambda$ in some interval which contains
$[0,\infty)$ there are sign-changing solutions to the problem (\ref{prob}), in
domains of the form (\ref{Omega}), which concentrate and blow up at an
$m$-dimensional sphere, as $p\rightarrow2_{N,m}^{\ast}$ from below, whose
limit profile in the transversal direction to the sphere of concentration is a
nonradial sign-changing solution to (\ref{limprob}), like those found in
\cite{c}.

The study of concentration phenomena for $p$ approaching $2_{N}^{\ast}$ from
above, is a much more delicate issue, beginning with the fact that solutions
to (\ref{prob}) for $p>2_{N}^{\ast}$ do not always exist. For $\lambda=0,$
standard bubbles were used as basic cells in \cite{dfm,dfm2,mpi,pr} to
construct positive solutions to the slightly supercritical problem
(\ref{prob}) with $p=2_{N}^{\ast}+\varepsilon,$ for small enough
$\varepsilon>0$, in domains with a hole, using the Ljapunov-Schmidt reduction
method. These solutions blow up, as $\varepsilon\rightarrow0,$ and their limit
profile at each blow-up point is a rescaling of the standard bubble. Solutions
in some contractible domains were constructed in \cite{mp1,mp2}.

Quite recently, sign-changing solutions to the slightly supercritical problem
(\ref{prob}) with $p=2_{N}^{\ast}+\varepsilon,$ $\varepsilon>0,$ were
exhibited by Musso and Wei \cite{mw} in domains with a small fixed hole, and
by Clapp and Pacella \cite{cp} in domains with a shrinking hole. The solutions
obtained in \cite{mw} concentrate at two different points in the domain, as
$\varepsilon\rightarrow0,$ and their limit profile at each of them is a
rescaling of one of the sign-changing solutions to the limit problem
(\ref{limprob}) in $\mathbb{R}^{N}$ constructed by del Pino, Musso, Pacard and
Pistoia in \cite{dmpp}, which resemble a large number of negative bubbles,
placed evenly along a circle, surrounding a positive bubble, placed at its
center. On the other hand, the sign-changing solutions exhibited in \cite{cp}
concentrate at a single point in the interior of the shrinking hole, as the
hole shrinks and $\varepsilon\rightarrow0,$ and their limit profile is a
rescaling of a nonradial sign-changing solution to (\ref{limprob}) like those
found in \cite{c}.

For $m\geq1,$ the existence of solutions for $p=2_{N,m}^{\ast}+\varepsilon$
and their concentration behavior seems to be, so far, an open question; see
Problem 4 in \cite{cpi}.\ In this paper we will show that, under some symmetry
assumptions, the problem (\ref{prob}) has infinitely many solutions in domains
of the form (\ref{Omega}) for $p>2_{N,m}^{\ast}$, up to some value which
depends on the symmetries; see Theorem \ref{thm:main_existence}. We will also
exhibit domains with a shrinking hole, in which there are positive and
sign-changing solutions which concentrate and blow up at an $m$-dimensional
sphere contained in the boundary of $\Omega,$ as the hole shrinks and
$p\rightarrow2_{N,m}^{\ast}$ from above. The limit profile of the positive
solutions, in the direction transversal to the sphere of concentration, will
be a rescaling of the standard bubble, whereas that of the sign-changing ones
will resemble one of the solutions to (\ref{limprob}) that were found in
\cite{c}.

We give, next, some examples of our results. For $n:=N-m,$ let $B$ be an
$n$-dimensional ball of radius $\delta_{0},$ centered on the half-line
$\{0\}\times(0,\infty),$ whose closure is contained in the half-space
$\mathbb{R}^{n-1}\times(0,\infty).$ We write the points in $\mathbb{R}%
^{n-1}\times(0,\infty)$ as $(y,t)$ with $y\in\mathbb{R}^{n-1},$ $t\in
(0,\infty)$\ and we set%
\begin{align*}
B_{\delta}  &  :=\{(y,t)\in B:\left\vert y\right\vert >\delta\}\text{ \ \ if
}\delta\in(0,\delta_{0}),\qquad B_{0}:=B,\\
\Omega_{\delta}  &  :=\{(y,z)\in\mathbb{R}^{n-1}\times\mathbb{R}%
^{m+1}:(y,\left\vert z\right\vert )\in B_{\delta}\},\qquad\Omega:=\Omega_{0}.
\end{align*}
We denote by $O(k)$ the group of all linear isometries of $\mathbb{R}^{k}$
and, for $v\in D^{1,2}(\mathbb{R}^{N}),$ we write
\[
\left\Vert v\right\Vert :=\left(  \int_{\mathbb{R}^{N}}\left\vert \nabla
v\right\vert ^{2}\right)  ^{1/2}.
\]
The following results establish the existence of positive and sign-changing
solutions to the problem (\ref{prob}) in $\Omega_{\delta}\ $and describe their
limit profile as $\delta\rightarrow0$ and $p\rightarrow2_{N,m}^{\ast}$ from
above. They are special cases of Theorems \ref{thm:main_existence} and
\ref{thm:supercritical_profile}, which apply to more general domains, and are
stated and proved in Sections \ref{sec:existence} and \ref{sec:blow-up}, respectively.

\begin{theorem}
\label{thm:main1}There exists $\lambda_{\ast}\leq0$ such that, for each
$\lambda\in(\lambda_{\ast},\infty)\cup\{0\},$ $\delta\in(0,\delta_{0})$ and
$p\in(2,\infty),$ the problem \emph{(\ref{prob})} has a positive solution
$v_{\delta,p}$ in $\Omega_{\delta}$ which satisfies%
\[
v_{\delta,p}(\gamma y,\varrho z)=v_{\delta,p}(y,z)\qquad\forall\gamma\in
O(n-1),\text{ }\varrho\in O(m+1),\ (y,z)\in\Omega_{\delta},
\]
and has minimal energy among all nontrivial solutions to \emph{(\ref{prob})}
in $\Omega_{\delta}$ with these symmetries.

Moreover, there exist sequences $(\delta_{k})$ in $(0,\delta_{0}),$ $(p_{k})$
in $(2_{N,m}^{\ast},\infty),$ $(\varepsilon_{k})$ in $(0,\infty)$ and
$(\zeta_{k})$ in $B\cap\left[  \{0\}\times(0,\infty)\right]  $ such that

\begin{enumerate}
\item[(i)] $\delta_{k}\rightarrow0,$ $\ p_{k}\rightarrow2_{N,m}^{\ast},$
$\ \varepsilon_{k}^{-1}$\emph{dist}$(\zeta_{k},\partial\Theta)\rightarrow
\infty,$ and $\zeta_{k}\rightarrow\zeta$ with%
\[
\text{\emph{dist}}(\zeta,\mathbb{R}^{n-1}\times\{0\})=\text{\emph{dist}%
}(B,\mathbb{R}^{n-1}\times\{0\}),
\]

\item[(ii)] $\lim_{k\rightarrow\infty}\left\Vert v_{\delta_{k},p_{k}%
}-\widetilde{U}_{\varepsilon_{k},\zeta_{k}}\right\Vert =0,$ where%
\[
\widetilde{U}_{\varepsilon_{k},\zeta_{k}}(y,z):=\varepsilon_{k}^{(2-n)/2}%
U\left(  \frac{(y,\left\vert z\right\vert )-\zeta_{k}}{\varepsilon_{k}%
}\right)
\]
and $U$ is the standard bubble in dimension $n.$
\end{enumerate}

The number $\lambda_{\ast}$ is negative if $m\geq2.$
\end{theorem}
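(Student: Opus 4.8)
The plan is to realize the symmetric solution $v_{\delta,p}$ as a constrained minimizer and then analyze its concentration by a careful reduction to the weighted problem (\ref{reduced_prob}) on $\Theta$. First I would fix the symmetry group $G := O(n-1)\times O(m+1)$ acting on $\R^{N} = \R^{n-1}\times\R^{m+1}$ and work in the space $H_G$ of $G$-invariant functions in $H_0^1(\Omega_\delta)$. For $\lambda > \lambda_{\ast}$, where $\lambda_{\ast} := -\mu_1$ and $\mu_1$ is the first Dirichlet eigenvalue of $-\Delta$ on $\Omega$ restricted to $G$-invariant functions (this is the point where $m\ge 2$ forces $\lambda_{\ast}<0$: the relevant eigenvalue on the reduced weighted domain $\Theta$ with weight $f=x_{N-m}^m$ is strictly positive and stays positive for the sub-ball $B$), the quadratic form $\|\nabla v\|_2^2 + \lambda\|v\|_2^2$ is positive definite on $H_G$, so the Nehari manifold is well defined and the energy functional $J_{\delta,p}$ is coercive and bounded below on it. Since every $G$-invariant function is radial in $z$, functions in $H_G$ correspond exactly to functions in the weighted space on $B_\delta$ with weight $f$, and here $p$ is subcritical relative to $2_{N-m}^{\ast}$ is \emph{not} needed — what is needed is only that the embedding $H_G \hookrightarrow L^p_G(\Omega_\delta)$ is compact, which follows from the principle of symmetric criticality together with compactness of the $O(m+1)$-orbit map for $p < 2_{N,m}^{\ast}\cdot(\text{something})$; for the full range $p\in(2,\infty)$ claimed here one invokes the additional $O(n-1)$-symmetry, which reduces to a genuinely one-dimensional radial problem where every power is admissible. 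Thus a minimizer $v_{\delta,p}$ over the Nehari manifold exists, and by the maximum principle together with the usual replacement $v\mapsto|v|$ it is positive; minimality of its energy among nontrivial symmetric solutions is immediate from the Nehari characterization.

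Second, to extract the concentration, I would let $\delta\to 0$ and $p\to 2_{N,m}^{\ast}$ simultaneously and track the least-energy level $c_{\delta,p}:=J_{\delta,p}(v_{\delta,p})$. The key estimate is that $c_{\delta,p}$ stays below the first level at which the symmetric Palais--Smale condition fails, namely roughly $\tfrac1n S^{n/2}$ (times the weight $f$ evaluated at the concentration point and a volume factor of the $m$-sphere), where $S$ is the best Sobolev constant in dimension $n=N-m$; one builds test functions by inserting a rescaled bubble $\widetilde U_{\varepsilon,\zeta}$ concentrated near a point $\zeta\in B\cap(\{0\}\times(0,\infty))$ that realizes the distance of $B$ to the hyperplane $\R^{n-1}\times\{0\}$, correcting for the Dirichlet condition on the shrinking hole and projecting onto $H_G$. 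As $\delta\to0$ the hole disappears, the cut-off cost is $o(1)$, and the energy of the test function converges to exactly the threshold level; this forces $c_{\delta,p}\to$ that threshold from below. A standard concentration-compactness / profile-decomposition argument in the $G$-invariant setting then shows that along suitable sequences $\delta_k\to0$, $p_k\to 2_{N,m}^{\ast}$ the minimizers $v_{\delta_k,p_k}$, after the natural rescaling by parameters $\varepsilon_k\to0$ centered at points $\zeta_k$, converge to the unique positive solution $U$ of the limit problem (\ref{limprob}) in dimension $n$; the exclusion of boundary blow-up and of multi-bubbling is exactly what the energy bound below the \emph{second} threshold buys us. The facts that $\varepsilon_k^{-1}\mathrm{dist}(\zeta_k,\partial\Theta)\to\infty$ and that $\mathrm{dist}(\zeta,\R^{n-1}\times\{0\}) = \mathrm{dist}(B,\R^{n-1}\times\{0\})$ come from optimizing the weight $f(\zeta)=\zeta_{N-m}^m$ in the test-function energy: the bubble wants to sit where $f$ is as small as possible consistent with staying inside $B$, i.e. as close as possible to the hyperplane $\{x_{N-m}=0\}$, hence on the lowest point of $B$.

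The main obstacle, and the heart of the argument, will be showing that the concentration is \emph{single-layered} and occurs at an interior point of $\Theta$ rather than on $\partial\Theta$ — i.e. ruling out the alternatives that concentration-compactness allows. Boundary concentration is delicate because near $\partial\Theta$ the Dirichlet condition interacts with the weight; the standard device is a Pohozaev-type identity on $\Theta$ against the weight $f$, or equivalently a clean energy lower bound showing that a bubble touching $\partial\Theta$ costs strictly more energy than the threshold — but with the shrinking hole $B_\delta$ one must be careful that the inner boundary component (the hole) does not itself attract a bubble; this is where sending $\delta_k\to0$ fast enough relative to $\varepsilon_k$ and to $p_k - 2_{N,m}^{\ast}$ is essential, and choosing these three sequences in the right order is the genuinely technical step. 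Once single interior blow-up at one $m$-sphere is established, identifying the profile as precisely $\widetilde U_{\varepsilon_k,\zeta_k}$ in the $H^1$-norm (not merely up to a bounded perturbation) follows from the strict energy matching $c_{\delta_k,p_k}\to\tfrac1n S^{n/2}f(\zeta)\,|\mathbb S^{m}|$ combined with the nondegeneracy and uniqueness of $U$, via the usual "no energy is lost" argument for Palais--Smale sequences at the first nontrivial level. The remaining regularity and positivity of $v_{\delta,p}$ for every $\lambda>\lambda_{\ast}$, $\delta\in(0,\delta_0)$, $p\in(2,\infty)$ is routine elliptic bootstrap plus the maximum principle, once existence of the symmetric minimizer is in hand.
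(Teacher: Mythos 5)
Your existence half is essentially right and matches the paper's route: with both symmetries, on the punctured domain every $O(n-1)$-orbit of the reduced problem on $B_{\delta}\subset\mathbb{R}^{n}$ has dimension $n-2\geq1$, so by the Hebey--Vaugon type embedding (Proposition 2.1 of \cite{cp}) the symmetric Sobolev space embeds compactly in $L^{p}$ for \emph{every} $p<\infty$, and Nehari minimization plus the replacement $v\mapsto|v|$ gives the positive least energy invariant solution. (Minor slip: the fully reduced problem is two-dimensional, in the variables $(|y|,|z|)$, not one-dimensional; since $2_{2}^{\ast}=\infty$ this is harmless.)

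The blow-up half, however, has a genuine gap: you never invoke, or supply a substitute for, the non-attainment result that actually forces concentration. In the paper, $\lambda_{\ast}$ is \emph{defined} by Theorem \ref{thm:cf} (Theorem 3.2 of \cite{cf}): for $\lambda\in(\lambda_{\ast},\infty)\cup\{0\}$ the critical weighted problem in the full domain $B$ has no least energy $\Gamma$-invariant solution, with $\lambda_{\ast}\in(-\lambda_{1}^{\phi},0]$ and $\lambda_{\ast}<0$ when $m\geq2$. Your identification $\lambda_{\ast}:=-\mu_{1}$ (the coercivity threshold) is incompatible with the statement: coercivity holds for all $\lambda>-\lambda_{1}^{\phi}$ and every $m\geq1$, so it cannot explain why strict negativity is claimed only for $m\geq2$; and for $\lambda$ negative but above $-\lambda_{1}^{\phi}$ a Brezis--Nirenberg type minimizer of the critical problem may well exist, in which case your minimizing family would simply converge and no blow-up occurs. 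Relatedly, your compactness logic points the wrong way: keeping $c_{\delta,p}$ strictly below the first level at which the symmetric Palais--Smale condition fails would yield \emph{convergence} to a solution of the critical problem (alternative (1) of Corollary \ref{cor:concentration}), not concentration. The correct mechanism is: rescale the supercritical least energy solutions onto the Nehari manifold of the critical problem in the whole domain, use Proposition \ref{prop:limits} and Lemma \ref{lem:delta_to_0} to see that, as $\delta_{k}\rightarrow0$ and $p_{k}\rightarrow2_{N,m}^{\ast}$, they form a minimizing sequence at the level $\ell_{\ast}^{\phi}$, and then apply the dichotomy of Corollary \ref{cor:concentration}; the nonexistence theorem excludes the convergent alternative, and the concentration alternative automatically delivers $\varepsilon_{k}^{-1}\mathrm{dist}(\zeta_{k},\partial\Theta)\rightarrow\infty$, the single-bubble profile, and the location of $\zeta$ through minimizing $a^{n/2}/c^{(n-2)/2}=x_{n}^{m}$ on the fixed-point set. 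In particular no test-function expansions, no Pohozaev exclusion of boundary or hole concentration, and no delicate balancing of the rates of $\delta_{k}$, $\varepsilon_{k}$ and $p_{k}-2_{N,m}^{\ast}$ are needed; without the non-attainment ingredient your argument, as written, cannot force the blow-up asserted in parts (i)--(ii).
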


The solutions given by Theorem \ref{thm:main1}\ concentrate on an
$m$-dimensional sphere, developing a positive layer which blows up at an
$m$-dimensional sphere contained in the boundary of $\Omega$ and located at
minimal distance to the plane of rotation $\mathbb{R}^{n-1}\times\{0\}.$ The
asymptotic profile of each layer in the transversal direction to its sphere of
concentration is a rescaling of the standard bubble.

The next theorem gives sign-changing solutions to the problem (\ref{prob})
with a different type of asymptotic profile. For $n\geq5$ and we write
$\mathbb{R}^{n-1}\equiv\mathbb{C}^{2}\times\mathbb{R}^{n-5}$ and the points in
$\mathbb{R}^{n-1}$ as $y=(\eta,\xi)$ with $\eta=(\eta_{1},\eta_{2}%
)\in\mathbb{C}^{2},$ $\xi\in\mathbb{R}^{n-5}.$

\begin{theorem}
\label{thm:main2}Assume that $n=5$ or $n\geq7.$ Then, there exists
$\lambda_{\ast}\leq0$ such that, for each $\lambda\in(\lambda_{\ast}%
,\infty)\cup\{0\},$ $\delta\in(0,\delta_{0})$ and $p\in(2,2_{N,m+1}^{\ast}),$
the problem \emph{(\ref{prob})} has a nontrivial sign-changing solution
$w_{\delta,p}$ in $\Omega_{\delta}$ which satisfies%
\[
w_{\delta,p}(\eta,\xi,z)=w_{\delta,p}(\mathrm{e}^{\mathrm{i}\vartheta}%
\eta,\alpha\xi,\varrho z),\qquad w_{\delta,p}(\eta_{1},\eta_{2},\xi
,z)=-w_{\delta,p}(-\bar{\eta}_{2},\bar{\eta}_{1},\xi,z),
\]
for every $\vartheta\in\lbrack0,\pi),$ $\alpha\in O(n-5),$ $\varrho\in O(m+1)$
and\ $(y,z)\in\Omega_{\delta},$ and which has minimal energy among all
nontrivial solutions to \emph{(\ref{prob})} in $\Omega_{\delta}$ with these
symmetry properties.

Moreover, there exist sequences $(\delta_{k})$ in $(0,\delta_{0}),$ $(p_{k})$
in $(2_{N,m}^{\ast},2_{N,m+1}^{\ast}),$ $(\varepsilon_{k})$ in $(0,\infty)$
and $(\zeta_{k})$ in $B\cap\left[  \{0\}\times(0,\infty)\right]  ,$ and a
nontrivial sign-changing solution $W$ to the limit problem
\emph{(\ref{limprob}),} such that

\begin{enumerate}
\item[(i)] $\delta_{k}\rightarrow0,$ $\ p_{k}\rightarrow2_{N,m}^{\ast},$
$\ \varepsilon_{k}^{-1}$\emph{dist}$(\zeta_{k},\partial\Theta)\rightarrow
\infty,$ and $\ \zeta_{k}\rightarrow\zeta$ \ with
\[
\text{\emph{dist}}(\zeta,\mathbb{R}^{n-1}\times\{0\})=\text{\emph{dist}%
}(\Theta,\mathbb{R}^{n-1}\times\{0\}),
\]

\item[(ii)] $W(\eta,\xi,t)=W(\mathrm{e}^{\mathrm{i}\vartheta}\eta,\alpha
\xi,t)$ and $W(\eta_{1},\eta_{2},\xi,t)=-W(-\bar{\eta}_{2},\bar{\eta}_{1}%
,\xi,t)$ for every $\vartheta\in\lbrack0,\pi),$ $\alpha\in O(n-5)$
and\ $(y,t)\in\mathbb{R}^{n-1}\times\mathbb{R}\equiv\mathbb{R}^{n},$ and $W$
has minimal energy among all nontrivial solutions to \emph{(\ref{limprob})}
with these symmetry properties,

\item[(iii)] $\lim_{k\rightarrow\infty}\left\Vert w_{\delta_{k},p_{k}%
}-\widetilde{W}_{\varepsilon_{k},\zeta_{k}}\right\Vert =0,$ where%
\[
\widetilde{W}_{\varepsilon_{k},\zeta_{k}}(y,z):=\varepsilon_{k}^{(2-n)/2}%
W\left(  \frac{(y,\left\vert z\right\vert )-\zeta_{k}}{\varepsilon_{k}%
}\right)  .
\]

\end{enumerate}

The number $\lambda_{\ast}$ is negative if $m\geq2.$
\end{theorem}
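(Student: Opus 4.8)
The plan is to obtain $w_{\delta,p}$ as a minimizer of the energy functional
\[
J_{\delta,p}(v):=\frac{1}{2}\int_{\Omega_{\delta}}\bigl(|\nabla v|^{2}+\lambda v^{2}\bigr)-\frac{1}{p}\int_{\Omega_{\delta}}|v|^{p}
\]
restricted to the Nehari-type manifold inside the space of functions with the prescribed symmetries, and then to extract a concentrating subsequence by a careful energy comparison. The reduction to a problem in $\Theta$ is the mechanism that makes the supercritical exponent harmless: functions which are radial in $z$ correspond, via \eqref{reduced_prob}, to functions on $\Theta\subset\mathbb{R}^{n}$, and the extra $O(n-5)$-invariance together with the complex Hopf-type symmetry $w(\eta_{1},\eta_{2},\xi,z)=-w(-\bar\eta_{2},\bar\eta_{1},\xi,z)$ cuts the ``effective dimension'' down far enough that $p<2_{N,m+1}^{\ast}$ becomes subcritical in the relevant Sobolev embedding; the antisymmetry forces any nontrivial minimizer to change sign. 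I would carry this out in three stages.

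\emph{Step 1 (existence in $\Omega_{\delta}$ for fixed $\delta,p$).} I would set up the weighted problem on $\Theta$, verify that the symmetry subspace embeds compactly into $L^{p}$ for $p<2_{N,m+1}^{\ast}$ (using the principle of symmetric criticality and the fact that the extra symmetries lower the critical exponent by one step), and check coercivity of the quadratic form for $\lambda>\lambda_{\ast}$, exactly as in the companion results \cite{cf,c} and in Theorem \ref{thm:main_existence}. A minimizer on the Nehari manifold then exists; the antisymmetry $w=-w\circ\sigma$ with $\sigma$ of the given form guarantees $w$ is sign-changing, and $\lambda_{\ast}<0$ for $m\ge2$ comes from the Hardy-type inequality associated with the weight $x_{N-m}^{m}$, as in Theorem \ref{thm:main1}.

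\emph{Step 2 (energy estimates as $\delta\to0$, $p\to 2_{N,m}^{\ast}$).} I would show that the minimal symmetric energy $c_{\delta,p}$ stays strictly below the threshold at which a concentrated single layer (a rescaled copy of the symmetric solution $W$ of \eqref{limprob} given by \cite{c}) appears, by plugging a suitably truncated and rescaled test function $\widetilde W_{\varepsilon,\zeta}$ built from $W$, centered at a point $\zeta$ realizing $\mathrm{dist}(\Theta,\mathbb{R}^{n-1}\times\{0\})$, into $J_{\delta,p}$ and optimizing in $\varepsilon$; the gain relative to the global minimum of $\|W\|^{2}$-type energy must beat the loss from the boundary and from the cutoff near the shrinking hole. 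This is the step I expect to be the main obstacle: one must choose the joint rate of $\delta\to0$, $p\to2_{N,m}^{\ast}$ and $\varepsilon\to0$ so that the hole's boundary sits far out on the scale $\varepsilon$ (whence $\varepsilon_{k}^{-1}\mathrm{dist}(\zeta_{k},\partial\Theta)\to\infty$) while the concentration point is pushed to the part of $\partial\Theta$ nearest the rotation plane — balancing the curvature/geometry terms of $\Theta$ against the weight $f(x)=x_{N-m}^{m}$, which is exactly where the distance to $\mathbb{R}^{n-1}\times\{0\}$ enters.

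\emph{Step 3 (identification of the limit profile).} Finally, from the energy bound and a concentration-compactness / blow-up analysis of the sequence $w_{\delta_{k},p_{k}}$ — rescaling around the concentration point at the natural scale $\varepsilon_{k}$, passing to the limit in the rescaled equation (whose limit is \eqref{limprob} since $p_{k}\to2_{N,m}^{\ast}=2_{n}^{\ast}$ after the reduction), and using lower semicontinuity together with the matching of energies — I would conclude that the rescaled solutions converge in $D^{1,2}$ to a nontrivial sign-changing solution $W$ of \eqref{limprob} inheriting all the symmetries, and that this limit $W$ is itself a minimizer among symmetric solutions (this is where the ``minimal energy'' characterization in (ii) is used: no loss of mass can occur because any splitting would produce two pieces each with at least the minimal symmetric energy, contradicting the sharp upper bound from Step 2). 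The convergence statement (iii), namely $\|w_{\delta_{k},p_{k}}-\widetilde W_{\varepsilon_{k},\zeta_{k}}\|\to0$, then follows from the usual Brezis–Lieb argument once one-bubble convergence is established.
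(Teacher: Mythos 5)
Your Step 1 is essentially the paper's route (reduction to the weighted problem on $\Theta_\delta$, symmetric criticality, and the compact embedding coming from the fact that in $\Theta_\delta$ every $\Gamma$-orbit of the Hopf-type group has dimension $\geq 1$, so the exponent threshold becomes $2_{n-1}^{\ast}=2_{N,m+1}^{\ast}$; the antisymmetry under $\tau$ forces sign changes). The asymptotic part, however, has a genuine gap: your Step 2 is aimed at proving that the symmetric least energy $c_{\delta,p}$ stays \emph{strictly below} a concentration threshold, but that is the device one uses to obtain compactness and hence existence of a nonconcentrating minimizer; it cannot by itself force the blow-up asserted in (i)--(iii). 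What actually drives the concentration in the paper is the opposite pair of facts: (a) the least energies converge exactly \emph{to} the threshold, i.e.\ $\ell_{\delta,2_{n}^{\ast}}^{\phi}\to\ell_{\ast}^{\phi}$ as $\delta\to0$ (Lemma \ref{lem:delta_to_0}, a capacity-type cutoff near the fixed-point set, which has codimension $\geq2$) together with the continuity in $p$ of Proposition \ref{prop:limits}, so that suitably rescaled least-energy solutions $\widetilde u_{\delta_k,p_k}$ form a minimizing sequence for the critical problem in the \emph{whole} domain $\Theta$; and (b) the nonexistence result Theorem \ref{thm:cf} (Theorem 3.2 of \cite{cf}), valid precisely for $\lambda\in(\lambda_{\ast},\infty)\cup\{0\}$ under the hypothesis dist$(\Theta^{\Gamma},\mathbb{R}^{n-1}\times\{0\})=\,$dist$(\Theta,\mathbb{R}^{n-1}\times\{0\})$, which rules out the compactness alternative in the equivariant concentration dichotomy (Theorem \ref{thm:concentration}, Corollary \ref{cor:concentration}) and thereby forces alternative (2), i.e.\ single-layer blow-up. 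Your proposal never rules out the scenario in which $w_{\delta_k,p_k}$ simply converges strongly to a least-energy solution of the critical problem in $\Theta$, in which case no sequences satisfying (i)--(iii) would be produced; without an ingredient playing the role of Theorem \ref{thm:cf} the argument does not close. Note also that $\lambda_{\ast}$ (and the claim $\lambda_{\ast}<0$ for $m\geq2$) comes from this nonexistence theorem, not from coercivity or a Hardy-type inequality, which only give the larger range $\lambda>-\lambda_{1}^{\phi}$.

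A second, related gap is the localization of the concentration points. You assert that the blow-up point is ``pushed to the part of $\partial\Theta$ nearest the rotation plane'' by balancing curvature terms against the weight in a test-function computation, but no such computation is set up, and that is not the mechanism. In the paper, the equivariant concentration theorem first forces $\zeta_k$ to lie in the fixed-point set $\Theta^{\Gamma}=B\cap[\{0\}\times(0,\infty)]$ (because concentration can only occur at orbits of finite isotropy index, and all non-fixed orbits here are infinite), and then identifies the limit location through minimization of $a(\zeta)^{n/2}/c(\zeta)^{(n-2)/2}=\zeta_{n}^{m}$ over $\overline{\Theta^{\Gamma}}$, i.e.\ minimal distance to $\mathbb{R}^{n-1}\times\{0\}$; the geometric hypothesis on $\Theta$ (automatic for the ball $B$) is exactly what makes this minimum coincide with the global one. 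Your Step 3 outline (rescaling, Brezis--Lieb, no splitting by energy matching) is compatible with this, but as written it neither explains why the limit profile inherits the full symmetries with $K=\Gamma$ nor why the blow-up sphere sits where claimed; both facts require the equivariant concentration analysis quoted from \cite{cf}, not just a standard one-bubble argument.
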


The solutions given by Theorem \ref{thm:main2}\ concentrate on an
$m$-dimensional sphere, developing a sign-changing layer which blows up at an
$m$-dimensional sphere contained in the boundary of $\Omega$ and located at
minimal distance to the plane of rotation $\mathbb{R}^{n-1}\times\{0\}.$ The
asymptotic profile of each layer in the transversal direction to its sphere of
concentration is a rescaling of a nonradial sign-changing solution to the
limit problem (\ref{limprob}), like those found in \cite{c}.

As we mentioned before, the solutions to the anisotropic problem
(\ref{reduced_prob}) give rise to solutions of the problem (\ref{prob}) in
domains of the form (\ref{Omega}). In Section \ref{sec:existence} we will
study a general anisotropic problem in an $n$-dimensional domain $\Theta.$ We
will assume that $\Theta$ has some symmetries and we will establish the
existence of infinitely many positive and sign-changing solutions to the
anisotropic problem for supercritical exponents $p>2_{n}^{\ast},$ up to some
value which depends on the symmetries. These results extend those obtained in
\cite{cp} for the problem with constant coefficients. In Section
\ref{sec:min_crit} we will describe the behavior of the minimizing sequences
for the variational functional associated to the anisotropic problem for
$p=2_{n}^{\ast}$. These sequences, either converge to a solution, or they blow
up. We will provide information on the location of the blow-up points and on
the symmetries of the solutions to the limit problem (\ref{limprob})\ which
occur as limit profiles. This will be used in Section \ref{sec:blow-up} to
obtain information on the concentration behavior and the limit profile of
positive and sign-changing solutions to the problem (\ref{prob}) in domains
with a shrinking hole, as the hole shrinks and $p\rightarrow2_{N,m}^{\ast}$
from above.

\section{Symmetries and existence for supercritical problems}

\label{sec:existence}Let $\Gamma$ be a closed subgroup of $O(n)$ and
$\phi:\Gamma\rightarrow\mathbb{Z}_{2}$ be a continuous homomorphism of groups.
A function $u:\mathbb{R}^{n}\rightarrow\mathbb{R}$ is said to be $\phi
$\emph{-equivariant} if
\begin{equation}
u(\gamma x)=\phi(\gamma)u(x)\qquad\forall\gamma\in\Gamma,\text{ }%
x\in\mathbb{R}^{n}. \label{eq:symm}%
\end{equation}
If $\phi$ is the trivial homomorphism, then (\ref{eq:symm}) simply says that
$u$ is a $\Gamma$-invariant function, whereas, if $\phi$ is surjective and $u$
is not trivial, then (\ref{eq:symm}) implies that $u$ is sign-changing,
nonradial and $G$-invariant, where $G:=\ker\phi.$

Let $\Theta$ be a bounded $\Gamma$-invariant domain in $\mathbb{R}^{n},$
$n\geq3,$ and $a\in\mathcal{C}^{1}(\overline{\Theta}),$ $b,c\in\mathcal{C}%
^{0}(\overline{\Theta})$ be $\Gamma$-invariant functions satisfying $a,c>0$ on
$\overline{\Theta}$. We assume that%
\begin{equation}
\text{there exists }x_{0}\in\Theta\text{ such that }\{\gamma\in\Gamma:\gamma
x_{0}=x_{0}\}\subset\ker\,\phi. \label{A}%
\end{equation}
This assumption guarantees that the space
\[
D_{0}^{1,2}(\Theta)^{\phi}:=\{u\in D_{0}^{1,2}(\Theta):u\text{ is }%
\phi\text{-equivariant}\}
\]
is infinite dimensional; see \cite{bcm}. As usual, $D_{0}^{1,2}(\Theta)$
denotes the closure of $\mathcal{C}_{c}^{\infty}(\Theta)$ in the Hilbert space%
\[
D^{1,2}(\mathbb{R}^{n}):=\{u\in L^{2_{n}^{\ast}}(\mathbb{R}^{n}):\nabla u\in
L^{2}(\mathbb{R}^{n},\mathbb{R}^{n})\}
\]
equiped with the norm%
\[
\left\Vert u\right\Vert :=\left(  \int_{\Theta}\left\vert \nabla u\right\vert
^{2}\right)  ^{1/2}.
\]
We shall also assume that the operator $-\,$div$(a\nabla)+b$ is coercive in
$D_{0}^{1,2}(\Theta)^{\phi},$ i.e., that%
\begin{equation}
\inf_{\substack{u\in D_{0}^{1,2}(\Theta)^{\phi}\\u\neq0}}\frac{\int_{\Theta
}(a(x)\left\vert \nabla u\right\vert ^{2}+b(x)u^{2})\mathrm{d}x}{\int_{\Theta
}\left\vert \nabla u\right\vert ^{2}}>0. \label{eq:coercivity}%
\end{equation}
We set
\[
\left\Vert u\right\Vert _{a,b}^{2}:=\int_{\Theta}(a(x)\left\vert \nabla
u\right\vert ^{2}+b(x)u^{2})\mathrm{d}x,\qquad\left\vert u\right\vert
_{c;p}^{p}:=\int_{\Theta}c(x)\left\vert u\right\vert ^{p}\mathrm{d}x.
\]
Assumption (\ref{eq:coercivity}) implies that $\left\Vert \cdot\right\Vert
_{a,b}$ is a norm in $D_{0}^{1,2}(\Theta)^{\phi}$ which is equivalent
$\left\Vert \cdot\right\Vert .$ Note that, as $c>0,$ $\left\vert
\cdot\right\vert _{c;p}$ is equivalent to the standard norm in $L^{p}%
(\Theta),$ which we denote by $\left\vert \cdot\right\vert _{p}.$

Our aim is to establish the existence of solutions to the problem
\begin{equation}%
\begin{cases}
-\,\text{div}(a(x)\nabla u)+b(x)u=c(x)|u|^{p-2}u & \text{in $\Theta$,}\\
u=0 & \text{on $\partial\Theta$.}\\
u(\gamma x)=\phi(\gamma)u(x), & \forall\gamma\in\Gamma\text{, \ }x\in\Theta,
\end{cases}
\label{anisotropic_prob}%
\end{equation}
for every $2<p<2_{n-d}^{\ast}$, where
\[
d:=\min\{\dim(\Gamma x):x\in\overline{\Theta}\},
\]
$\Gamma x:=\{\gamma x:\gamma\in\Gamma\}$ is the $\Gamma$-orbit of $x,$
$2_{k}^{\ast}:=\frac{2k}{k-2}$ if $k\geq3$ and $2_{k}^{\ast}:=\infty$ if
$k=1,2.$ Note that $2_{n-d}^{\ast}>2_{n}^{\ast}$ if $d>0.$

A (weak) solution to the problem (\ref{anisotropic_prob}) is a function $u\in
D_{0}^{1,2}(\Theta)^{\phi}\cap L^{p}(\Theta)$ such that%
\begin{equation}
\int_{\Theta}(a(x)\nabla u\cdot\nabla\psi+b(x)u\psi)\mathrm{d}x-\int_{\Theta
}c(x)|u|^{p-2}u\psi\,\mathrm{d}x=0\text{\qquad}\forall\psi\in\mathcal{C}%
_{c}^{\infty}(\Theta). \label{eq:solution}%
\end{equation}
Proposition 2.1 of \cite{cp} asserts that $D_{0}^{1,2}(\Theta)^{\phi}$ is
continuously embedded in $L^{p}(\Theta)$ for any real number $p\in
\lbrack1,2_{n-d}^{\ast}]$, and that the embedding is compact for $p\in
\lbrack1,2_{n-d}^{\ast})$. The proof relies on a result by Hebey and Vaugon
\cite{hv} which establishes these facts for $\Gamma$-invariant functions.
Therefore, the functional%
\[
J_{p}(u):=\frac{1}{2}\left\Vert u\right\Vert _{a,b}^{2}-\frac{1}{p}\left\vert
u\right\vert _{c;p}^{p}%
\]
is well defined in the space $D_{0}^{1,2}(\Theta)^{\phi}$ if $p\in
(2,2_{n-d}^{\ast}].$

\begin{lemma}
\label{lemma_crit}For any real number $p\in(2,2_{n-d}^{\ast}]$ the critical
points of the functional $J_{p}$ in the space $D_{0}^{1,2}(\Theta)^{\phi}$ are
the solutions to the problem \emph{(\ref{anisotropic_prob})}.
\end{lemma}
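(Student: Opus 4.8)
The plan is to establish the equivalence of critical points of $J_p$ with weak solutions of (\ref{anisotropic_prob}) in the usual way: by computing the Gâteaux derivative of $J_p$ and then invoking the principle of symmetric criticality. First I would note that, since $D_0^{1,2}(\Theta)^\phi$ is continuously embedded in $L^p(\Theta)$ for $p\in(2,2_{n-d}^*]$ by Proposition 2.1 of \cite{cp}, the functional $J_p$ is of class $\mathcal{C}^1$ on this space, with derivative
\[
J_p'(u)\psi=\int_\Theta\bigl(a(x)\nabla u\cdot\nabla\psi+b(x)u\psi\bigr)\mathrm{d}x-\int_\Theta c(x)|u|^{p-2}u\psi\,\mathrm{d}x
\]
for all $\psi\in D_0^{1,2}(\Theta)^\phi$. (The differentiability of the term $\frac1p|u|_{c;p}^p$ is the standard fact that $u\mapsto\int c|u|^p$ is $\mathcal{C}^1$ on $L^p$ when $c$ is bounded.) Hence $u$ is a critical point of $J_p$ restricted to $D_0^{1,2}(\Theta)^\phi$ if and only if the identity above vanishes for every $\phi$-equivariant test function $\psi$.

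Next I would observe that $J_p$ is actually the restriction to $D_0^{1,2}(\Theta)^\phi$ of a functional — call it $\widetilde J_p$ — defined on the larger space $\{u\in D_0^{1,2}(\Theta): u\in L^p(\Theta)\}$ by the same formula, on which the group $\Gamma$ acts by $(\gamma\cdot u)(x):=\phi(\gamma)u(\gamma^{-1}x)$. Because $a,b,c$ are $\Gamma$-invariant and $\phi(\gamma)^2=1$, the functional $\widetilde J_p$ is invariant under this action, and $D_0^{1,2}(\Theta)^\phi$ is precisely its fixed-point space. The principle of symmetric criticality (Palais) then gives that any critical point of $J_p$ on the fixed-point space is a critical point of $\widetilde J_p$ on the whole space, i.e. $\widetilde J_p'(u)\psi=0$ for all $\psi\in\mathcal{C}_c^\infty(\Theta)$ (which are dense). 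Comparing with (\ref{eq:solution}), this says exactly that $u$ is a weak solution of the equation, and since $u\in D_0^{1,2}(\Theta)^\phi$ it automatically satisfies the boundary condition $u=0$ on $\partial\Theta$ and the equivariance condition $u(\gamma x)=\phi(\gamma)u(x)$. The converse — that a solution in the sense of (\ref{eq:solution}) is a critical point of $J_p$ — is immediate, since such a $u$ lies in $D_0^{1,2}(\Theta)^\phi\cap L^p(\Theta)$ by hypothesis and testing against $\psi\in\mathcal{C}_c^\infty(\Theta)^\phi\subset\mathcal{C}_c^\infty(\Theta)$ already forces $J_p'(u)=0$ on a dense subset of the fixed-point space.

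The only point requiring a little care — and the one I would single out as the main obstacle, though it is a mild one — is verifying that the hypotheses of the symmetric criticality principle are met: $\Gamma$ is a compact Lie group (a closed subgroup of $O(n)$), it acts isometrically and continuously on the relevant Hilbert/Banach space, and $\widetilde J_p$ is $\Gamma$-invariant and $\mathcal{C}^1$. Alternatively, one can bypass the abstract principle entirely by a direct density argument: given a critical point $u$ of $J_p$, one shows that the functional $\ell(\psi):=\widetilde J_p'(u)\psi$, which is a continuous linear form on $D_0^{1,2}(\Theta)\cap L^p(\Theta)$ vanishing on the $\phi$-equivariant subspace, in fact vanishes on all of $\mathcal{C}_c^\infty(\Theta)$; this follows by averaging an arbitrary test function over $\Gamma$ with the weight $\phi(\gamma)$ — using the Haar measure on the compact group $\Gamma$ — to produce a $\phi$-equivariant function, together with the $\Gamma$-invariance of $a,b,c$ and a change of variables. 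Either route completes the proof.
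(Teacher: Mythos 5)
Your proposal is correct, and the paper's proof is precisely the direct averaging argument you sketch as your alternative route: it tests $J_p'(u)$ against the Haar average $\widetilde\psi(x)=\frac{1}{\mu(\Gamma)}\int_\Gamma\phi(\gamma)\psi(\gamma x)\,\mathrm{d}\mu$ of an arbitrary $\psi\in\mathcal{C}_c^\infty(\Theta)$ and unwinds it using Fubini, the $\Gamma$-invariance of $a,b,c$ and a change of variables. Invoking Palais' symmetric criticality abstractly is just a repackaging of this same averaging for a compact linear group action (at the cost of the extra bookkeeping on the space $D_0^{1,2}(\Theta)\cap L^p(\Theta)$ that you rightly flag), so the two routes are essentially identical in substance.
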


\begin{proof}
Let $u\in D_{0}^{1,2}(\Theta)^{\phi}$ be a critical point of $J_{p}$ in
$D_{0}^{1,2}(\Theta)^{\phi}$. Then,
\[
J_{p}^{\prime}(u)\vartheta=\int_{\Theta}(a(x)\nabla u\cdot\nabla
\vartheta+b(x)u\vartheta-c(x)|u|^{p-2}u\vartheta)\,\mathrm{d}x=0\text{\quad
}\forall\vartheta\in D_{0}^{1,2}(\Theta)^{\phi}.
\]
As $D_{0}^{1,2}(\Theta)^{\phi}\subset L^{p}(\Theta)$ for $1\leq p\leq
2_{n-d}^{\ast}$ we need only to prove that $u$ satisfies (\ref{eq:solution}).
Let $\psi\in\mathcal{C}_{c}^{\infty}(\Theta),$ and define
\[
\widetilde{\psi}(x):=\frac{1}{\mu(\Gamma)}\int_{\Gamma}\phi(\gamma)\psi(\gamma
x)\mathrm{d}\mu,
\]
where $\mu$ is the Haar measure on $\Gamma$. Note that $\widetilde{\psi}\in
D_{0}^{1,2}(\Theta)^{\phi}.$ Observe also that, as $u$ is $\phi$-equivariant,
we have that%
\[
\phi(\gamma)\nabla u(x)=\nabla\left(  u\circ\gamma\right)  (x)=\gamma
^{-1}\nabla u(\gamma x)\qquad\forall\gamma\in\Gamma,\text{ }x\in\Theta.
\]
Since $J_{p}^{\prime}(u)\widetilde{\psi}=0,$ and $a,b,c$ are $\Gamma
$-invariant, using Fubini's theorem and performing a change of variable, we
get%
\begin{align*}
0  &  =\int_{\Theta}(a(x)\nabla u(x)\cdot\nabla\widetilde{\psi}%
(x)+b(x)u(x)\widetilde{\psi}(x)-c(x)|u(x)|^{p-2}u(x)\widetilde{\psi
}(x))\mathrm{d}x\\
&  =\frac{1}{\mu(\Gamma)}\int_{\Theta}\int_{\Gamma}\left[  a(x)\phi
(\gamma)\nabla u(x)\cdot\gamma^{-1}\nabla\psi(\gamma x)+b(x)\phi
(\gamma)u(x)\psi(\gamma x)\right. \\
&  \hspace{1in}\left.  -c(x)|\phi(\gamma)u(x)|^{p-2}\phi(\gamma)u(x)\psi
(\gamma x)\right]  \mathrm{d}\mu\,\mathrm{d}x\\
&  =\frac{1}{\mu(\Gamma)}\int_{\Gamma}\int_{\Theta}\left[  a(x)\gamma
^{-1}\nabla u(\gamma x)\cdot\gamma^{-1}\nabla\psi(\gamma x)+b(x)u(\gamma
x)\psi(\gamma x)\right. \\
&  \hspace{1in}\left.  -c(x)|u(\gamma x)|^{p-2}u(\gamma x)\psi(\gamma
x)\right]  \mathrm{d}x\,\mathrm{d}\mu\\
&  =\frac{1}{\mu(\Gamma)}\int_{\Gamma}\int_{\Theta}\left[  a(\gamma x)\nabla
u(\gamma x)\cdot\nabla\psi(\gamma x)+b(\gamma x)u(\gamma x)\psi(\gamma
x)\right. \\
&  \hspace{1in}\left.  -c(\gamma x)|u(\gamma x)|^{p-2}u(\gamma x)\psi(\gamma
x)\right]  \mathrm{d}x\,\mathrm{d}\mu\\
&  =\frac{1}{\mu(\Gamma)}\int_{\Gamma}\mathrm{d}\mu\int_{\Theta}\left[
a(\xi)\nabla u(\xi)\cdot\nabla\psi(\xi)+b(\xi)u(\xi)\psi(\xi)-c(\xi
)|u(x)|^{p-2}u(\xi)\psi(\xi)\right]  \mathrm{d}\xi\\
&  =\int_{\Theta}\left[  a(\xi)\nabla u(\xi)\cdot\nabla\psi(\xi)+b(\xi
)u(\xi)\psi(\xi)-c(\xi)|u(x)|^{p-2}u(\xi)\psi(\xi)\right]  \mathrm{d}\xi.
\end{align*}
Therefore $u$ is a solution to the problem (\ref{anisotropic_prob}).
\end{proof}

The nontrivial critical points of the functional $J_{p}:D_{0}^{1,2}%
(\Theta)^{\phi}\rightarrow\mathbb{R}$ lie on the Nehari manifold%
\[
\mathcal{N}_{p}^{\phi}:=\left\{  u\in D_{0}^{1,2}(\Theta)^{\phi}:\left\Vert
u\right\Vert _{a,b}^{2}=\left\vert u\right\vert _{c;p}^{p},\,u\neq0\right\}
,
\]
which is a $\mathcal{C}^{2}$-Hilbert manifold, radially diffeomorphic to the
unit sphere in $D_{0}^{1,2}(\Theta)^{\phi},$ and a natural constraint for this
functional. Set
\[
\ell_{p}^{\phi}:=\inf\{J_{p}(u):u\in\mathcal{N}_{p}^{\phi}\}.
\]
Then, $\ell_{p}^{\phi}>0.$ A \textit{least energy solution} to the problem
(\ref{anisotropic_prob}) is a minimizer for $J_{p}$ on $\mathcal{N}_{p}^{\phi
}.$ The following result extends Theorem 2.3 in \cite{cp}.

\begin{theorem}
\label{thm:existence}If $p\in(2,2_{n-d}^{\ast})$ then the problem
\emph{(\ref{anisotropic_prob})} has a least energy solution, and an unbounded
sequence of solutions.
\end{theorem}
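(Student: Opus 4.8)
The plan is to prove existence of a least energy solution and of an unbounded sequence of solutions by working with the functional $J_p$ on the Nehari manifold $\mathcal{N}_p^\phi$, exploiting the compactness of the embedding $D_0^{1,2}(\Theta)^\phi\hookrightarrow L^p(\Theta)$ for $p\in(2,2_{n-d}^\ast)$ guaranteed by Proposition 2.1 of \cite{cp}. First I would establish that $J_p$ restricted to $\mathcal{N}_p^\phi$ satisfies the Palais--Smale condition. Given a $(PS)_c$ sequence $(u_k)$ in $\mathcal{N}_p^\phi$, the Nehari constraint $\left\Vert u_k\right\Vert_{a,b}^2=\left\vert u_k\right\vert_{c;p}^p$ together with $J_p(u_k)=\frac{1}{2}\left\Vert u_k\right\Vert_{a,b}^2-\frac{1}{p}\left\vert u_k\right\vert_{c;p}^p=\left(\frac12-\frac1p\right)\left\Vert u_k\right\Vert_{a,b}^2\to c$ gives boundedness of $(u_k)$ in $D_0^{1,2}(\Theta)^\phi$ (using the equivalence of $\left\Vert\cdot\right\Vert_{a,b}$ with $\left\Vert\cdot\right\Vert$ from coercivity (\ref{eq:coercivity})). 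Passing to a subsequence, $u_k\rightharpoonup u$ weakly in $D_0^{1,2}(\Theta)^\phi$ and, by compactness, $u_k\to u$ strongly in $L^p(\Theta)$; the standard argument (using that the derivative of the constraint is bounded away from zero on $\mathcal{N}_p^\phi$, so Lagrange multipliers are controlled) then upgrades this to $u_k\to u$ strongly in $D_0^{1,2}(\Theta)^\phi$, with $u\in\mathcal{N}_p^\phi$ since $c>0$ keeps the limit from being zero when $c>0$ and $p>2$. I should also verify that $\mathcal{N}_p^\phi$ is a $\mathcal{C}^2$ complete manifold radially diffeomorphic to the unit sphere and a natural constraint, so that constrained and free critical points coincide, combined with Lemma \ref{lemma_crit} to identify these with solutions of (\ref{anisotropic_prob}).

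With the $(PS)$ condition in hand, the existence of a least energy solution follows immediately: $\ell_p^\phi=\inf_{\mathcal{N}_p^\phi}J_p>0$ is attained because a minimizing sequence is, after projection, a $(PS)_{\ell_p^\phi}$ sequence, hence has a convergent subsequence whose limit realizes the infimum and is a nontrivial solution. For the unbounded sequence of solutions I would invoke the $\mathbb{Z}_2$-equivariant minimax theory (the symmetric mountain pass / Ljusternik--Schnirelmann scheme): $J_p$ is even, $J_p(0)=0$, it has the mountain pass geometry near $0$ (here one uses $\left\Vert u\right\Vert_{a,b}^2\ge \alpha\left\Vert u\right\Vert^2$ and the subcritical Sobolev-type embedding to get a small-sphere bound, and $J_p(tu)\to-\infty$ for fixed $u\ne0$), and since $D_0^{1,2}(\Theta)^\phi$ is infinite dimensional by assumption (\ref{A}) and \cite{bcm}, one can use subspaces of arbitrarily large dimension on which $J_p$ is negative outside a large ball. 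The usual construction of minimax values $c_j$ over classes of sets of genus $\ge j$ yields $c_j\to\infty$, and the $(PS)$ condition makes each $c_j$ a critical value; distinctness of infinitely many of them gives infinitely many solutions with unbounded energies, hence an unbounded sequence of solutions.

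The main obstacle is the $(PS)$ verification, and specifically making sure the strong convergence argument genuinely uses the equivariant compact embedding rather than the global Sobolev embedding, which fails once $p>2_n^\ast$. The delicate point is that the Lagrange multiplier terms arising from the constraint $\mathcal{N}_p^\phi$ must be shown to vanish in the limit: one writes $J_p'(u_k)=\mu_k G'(u_k)+o(1)$ where $G$ is the Nehari functional, shows $\langle G'(u_k),u_k\rangle$ is bounded away from $0$ on $\mathcal{N}_p^\phi$ (this is where $p>2$ enters, giving $\langle G'(u_k),u_k\rangle=(2-p)\left\vert u_k\right\vert_{c;p}^p<0$ bounded away from zero), so $\mu_k\to0$, and then $\left\Vert u_k-u\right\Vert_{a,b}^2=\langle J_p'(u_k),u_k-u\rangle-\int c|u_k|^{p-2}u_k(u_k-u)+o(1)\to0$ because the last integral tends to zero by the $L^p$-convergence from the compact embedding. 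Everything else (geometry, genus estimates, $c_j\to\infty$) is routine once this compactness is secured; I would present it by citing the analogous steps in \cite{cp} and indicating only the modifications needed to accommodate the variable coefficients $a,b,c$, which change nothing essential because they are continuous, $\Gamma$-invariant, and bounded above and below.
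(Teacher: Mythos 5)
Your proposal is correct and follows essentially the same route as the paper: Lemma \ref{lemma_crit} plus the compact embedding of $D_0^{1,2}(\Theta)^{\phi}$ into $L^p(\Theta)$ from Proposition 2.1 of \cite{cp} yield the Palais--Smale condition, minimization on $\mathcal{N}_p^{\phi}$ gives the least energy solution, and the symmetric mountain pass theorem gives the unbounded sequence of solutions. You merely spell out the ``standard argument'' (boundedness of $(PS)$ sequences, vanishing Lagrange multipliers, strong convergence via the $L^p$-compactness) that the paper leaves implicit.
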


\begin{proof}
By Lemma \ref{lemma_crit}, the critical points of the functional $J_{p}$ i n
the space $D_{0}^{1,2}(\Theta)^{\phi}$ are the solutions to the problem
(\ref{anisotropic_prob}). Proposition 2.1 of \cite{cp} asserts that
$D_{0}^{1,2}(\Theta)^{\phi}$ is compactly embedded in $L^{p}(\Theta)$ for
$p\in(2,2_{n-d}^{\ast}),$ hence, a standard argument shows that the functional
$J_{p}:D_{0}^{1,2}(\Theta)^{\phi}\rightarrow\mathbb{R}$ satisfies the
Palais-Smale condition. Therefore, $J_{p}$ attains its minimum on
$\mathcal{N}_{p}^{\phi}.$ Moreover, as the functional is even and has the
mountain pass geometry, the symmetric mountain pass theorem \cite{ar} yields
the existence of an unbounded sequence of critical values for $J_{p}$ on
$D_{0}^{1,2}(\Theta)^{\phi}.$
\end{proof}

We now derive a multiplicity result for the supercritical problem
(\ref{prob}). Assume that the closure of $\Theta$ is contained in
$\mathbb{R}^{n-1}\times(0,\infty)$ and, for $m\geq1,$ let%
\begin{equation}
\lambda_{1}^{\phi}:=\inf_{\substack{u\in D_{0}^{1,2}(\Theta)^{\phi}\\u\neq
0}}\frac{\int_{\Theta}x_{n}^{m}\left\vert \nabla u\right\vert ^{2}}%
{\int_{\Theta}x_{n}^{m}u^{2}}. \label{eq:lambda1}%
\end{equation}
As the $n$-th coordinate $x_{n}$ of $x$ is positive for every $x\in
\overline{\Theta},$ from the Poincar\'{e} inequality we obtain that
$\lambda_{1}^{\phi}>0.$

\begin{theorem}
\label{thm:main_existence}If $\lambda\in(-\lambda_{1}^{\phi},\infty)$ and
$p\in(2,2_{n-d}^{\ast}),$ then the problem \emph{(\ref{prob})} has a least
energy solution and an unbounded sequence of solutions in%
\[
\Omega:=\{(y,z)\in\mathbb{R}^{n-1}\times\mathbb{R}^{m+1}:(y,\left\vert
z\right\vert )\in\Theta\},
\]
which satisfy%
\begin{equation}
v(\gamma y,\varrho z)=\phi(\gamma)v(y,z)\qquad\forall\gamma\in\Gamma,\text{
}\varrho\in O(m+1),\text{\ }(y,z)\in\Omega. \label{rotational_symm}%
\end{equation}

\end{theorem}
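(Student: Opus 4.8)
The plan is to reduce the problem \eqref{prob} on $\Omega$ to the anisotropic problem \eqref{anisotropic_prob} on $\Theta$ with a suitable choice of coefficients, and then invoke Theorem \ref{thm:existence}. First I would observe that if $v:\Omega\to\mathbb{R}$ is radial in the $z$-variable, say $v(y,z)=u(y,|z|)$ with $u:\Theta\to\mathbb{R}$, then a direct computation in polar coordinates on the $z$-factor shows
\[
\int_{\Omega}\left(|\nabla v|^{2}+\lambda v^{2}\right)=\omega_{m}\int_{\Theta}x_{n}^{m}\left(|\nabla u|^{2}+\lambda u^{2}\right),\qquad
\int_{\Omega}|v|^{p}=\omega_{m}\int_{\Theta}x_{n}^{m}|u|^{p},
\]
where $\omega_{m}$ is the measure of the unit $m$-sphere and $x_{n}$ is the $n$-th coordinate on $\Theta$. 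Hence, up to the constant factor $\omega_{m}$, the energy functional for \eqref{prob} restricted to functions with the symmetry \eqref{rotational_symm} coincides with $J_{p}$ for \eqref{anisotropic_prob} with $a(x)=c(x)=x_{n}^{m}$ and $b(x)=\lambda x_{n}^{m}$. Since $\overline{\Theta}\subset\mathbb{R}^{n-1}\times(0,\infty)$, the weight $x_{n}^{m}$ is a positive function of class $\mathcal{C}^{1}(\overline{\Theta})$, so $a,c>0$ on $\overline{\Theta}$ and $a\in\mathcal{C}^{1}$, $b,c\in\mathcal{C}^{0}$, and all three are $\Gamma$-invariant because $\Gamma\subset O(n)$ acts only on $\Theta$ and the group $O(m+1)$ acts only on $z$.

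The next step is to verify the coercivity hypothesis \eqref{eq:coercivity}. By definition of $\lambda_{1}^{\phi}$ in \eqref{eq:lambda1}, for every $u\in D_{0}^{1,2}(\Theta)^{\phi}$ we have $\int_{\Theta}x_{n}^{m}u^{2}\le(\lambda_{1}^{\phi})^{-1}\int_{\Theta}x_{n}^{m}|\nabla u|^{2}$, so
\[
\|u\|_{a,b}^{2}=\int_{\Theta}x_{n}^{m}|\nabla u|^{2}+\lambda\int_{\Theta}x_{n}^{m}u^{2}\ \ge\ \Bigl(1+\frac{\lambda}{\lambda_{1}^{\phi}}\Bigr)\int_{\Theta}x_{n}^{m}|\nabla u|^{2}
\]
when $\lambda\le 0$, and the factor $1+\lambda/\lambda_{1}^{\phi}$ is strictly positive precisely because $\lambda>-\lambda_{1}^{\phi}$; when $\lambda\ge 0$ the bound $\|u\|_{a,b}^{2}\ge\int_{\Theta}x_{n}^{m}|\nabla u|^{2}$ is immediate. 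Since $x_{n}$ is bounded below by a positive constant on the compact set $\overline{\Theta}$, the quantity $\int_{\Theta}x_{n}^{m}|\nabla u|^{2}$ is bounded below by a positive multiple of $\|u\|^{2}$, which gives \eqref{eq:coercivity}. The assumption \eqref{A} is already part of the standing hypotheses on $\Gamma,\phi,\Theta$ in this section, so it need not be re-established.

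With these verifications in place, Theorem \ref{thm:existence} applies to \eqref{anisotropic_prob} with the above coefficients and yields a least energy solution $u$ together with an unbounded sequence of solutions in $D_{0}^{1,2}(\Theta)^{\phi}$ for every $p\in(2,2_{n-d}^{\ast})$. To finish, I would show that each such $u$ corresponds to a solution $v(y,z):=u(y,|z|)$ of \eqref{prob} on $\Omega$ satisfying \eqref{rotational_symm}: the $\phi$-equivariance in $y$ is inherited from $u$, the $O(m+1)$-invariance in $z$ is built in, and testing the weak formulation \eqref{eq:solution} against functions of the form $\psi(x)=\chi(y)\theta(|z|)$ (with the change of variables reversed) shows $v$ is a weak solution of $-\Delta v+\lambda v=|v|^{p-2}v$ in $\Omega$; elliptic regularity gives $v=0$ on $\partial\Omega$. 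The energy correspondence $\mathcal{E}_{\Omega}(v)=\omega_{m}J_{p}(u)$ transfers the least-energy property and the unboundedness of the sequence. The only mildly delicate point is that the minimum and the minimax levels must be taken within the class of functions with the full symmetry \eqref{rotational_symm}, so that ``least energy'' in the statement is understood in that symmetric class; this is exactly how Theorem \ref{thm:existence} is phrased (critical points in $D_{0}^{1,2}(\Theta)^{\phi}$), so no extra work is needed, but it should be stated explicitly. I do not expect any serious obstacle here — the proof is essentially a bookkeeping reduction — with the coercivity estimate being the one computational point that actually uses the hypothesis $\lambda>-\lambda_{1}^{\phi}$.
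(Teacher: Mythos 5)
Your proposal is correct and follows essentially the same route as the paper: reduce to the anisotropic problem \eqref{anisotropic_prob} with $a(x)=c(x)=x_n^m$, $b(x)=\lambda x_n^m$, check that $\lambda>-\lambda_1^{\phi}$ gives the coercivity hypothesis \eqref{eq:coercivity}, and invoke Theorem \ref{thm:existence} together with the correspondence $v(y,z)=u(y,|z|)$. You merely supply more detail (the $\omega_m$ factor and the explicit coercivity estimate) than the paper's brief proof.
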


\begin{proof}
A straighforward computation shows that $v$ is a solution to the problem
(\ref{prob}) in $\Omega$ which satisfies (\ref{rotational_symm}) if and only
if the function\ $u$ given by $v(y,z)=u(y,\left\vert z\right\vert )$ is a
solution to the problem (\ref{anisotropic_prob}) with $a(x):=x_{n}^{m}=:c(x)$
and $b(x):=\lambda x_{n}^{m}.$ Moreover, $v$ has minimal energy if and only if
$u$ does. Note that (\ref{eq:coercivity}) is satisfied if $\lambda\in
(-\lambda_{1}^{\phi},\infty).$ So this result follows from Theorem
\ref{thm:existence}.
\end{proof}

For $p\in(2,2_{n-d}^{\ast})$ let $u_{p}$ be a least energy solution to the
problem (\ref{anisotropic_prob}). Fix $q\in(2,2_{n-d}^{\ast})$ and let
$t_{q,p}\in(0,\infty)$ be such that $\widetilde{u}_{p}:=t_{q,p}u_{p}%
\in\mathcal{N}_{q}^{\phi},$ i.e.,
\begin{equation}
t_{q,p}=\left(  \frac{\left\Vert u_{p}\right\Vert _{a,b}^{2}}{\left\vert
u_{p}\right\vert _{c;q}^{q}}\right)  ^{\frac{1}{q-2}}=\left(  \frac{\left\vert
u_{p}\right\vert _{c;p}^{p}}{\left\vert u_{p}\right\vert _{c;q}^{q}}\right)
^{\frac{1}{q-2}}. \label{eq:t}%
\end{equation}
We will show that $\lim_{p\rightarrow q}J_{q}\left(  \widetilde{u}_{p}\right)
=\ell_{q}^{\phi}.$ The proof is similar to that of Proposition 2.5\ in
\cite{cp}. We give the details for the reader's convenience, starting with the
following lemma.

\begin{lemma}
\label{lem:p_to_q}If $p_{k},q\in(2,2_{n-d}^{\ast}),$ $p_{k}\rightarrow q,$ and
$(u_{k})$ is a bounded sequence in $D_{0}^{1,2}(\Theta)^{\phi},$ then%
\[
\lim_{k\rightarrow\infty}\int_{\Theta}\left(  c(x)\left\vert u_{k}\right\vert
^{p_{k}}-c(x)\left\vert u_{k}\right\vert ^{q}\right)  \mathrm{d}x=0.
\]

\end{lemma}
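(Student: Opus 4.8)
The plan is to estimate the integrand $c(x)(|u_k|^{p_k}-|u_k|^q)$ pointwise and then invoke dominated convergence, using the uniform $L^r$-bounds that the compact embedding $D_0^{1,2}(\Theta)^\phi \hookrightarrow L^r(\Theta)$ supplies for a range of exponents $r$ strictly larger than $q$. Since $\overline{\Theta}$ is compact and $c$ is continuous, $|c(x)| \leq C_0$ on $\overline{\Theta}$, so it suffices to control $\int_\Theta \big| |u_k|^{p_k} - |u_k|^q \big|\,\mathrm dx$. The elementary inequality $|a^s - a^t| \leq a^t|\log a|\,|s-t|\,\max\{1,a^{|s-t|}\}$ for $a\geq 0$ (or, more simply, splitting into the regions $|u_k|\leq 1$ and $|u_k|>1$) shows that $\big| |u_k|^{p_k}-|u_k|^q \big|$ is bounded by a constant times $|s-t|$ times something integrable; the point is that on the set $\{|u_k|\leq 1\}$ we have $\big| |u_k|^{p_k} - |u_k|^q \big| \leq |p_k - q|\, |u_k|^{\min\{p_k,q\}} |\log|u_k|| \leq C|p_k-q|$ (using $t|\log t|$ bounded on $[0,1]$ up to a factor, and $|\Theta|<\infty$), so that part tends to $0$ uniformly.

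The main work is the region $\{|u_k|>1\}$. There one bounds $\big| |u_k|^{p_k}-|u_k|^q \big| \leq |p_k-q|\,|u_k|^{\max\{p_k,q\}}\log|u_k| \leq C\,|p_k-q|\,|u_k|^{\max\{p_k,q\}+\sigma}$ for any small $\sigma>0$. Choosing $\sigma$ small enough that $q+\sigma < 2_{n-d}^\ast$ and noting that $p_k\to q$ forces $\max\{p_k,q\}+\sigma < 2_{n-d}^\ast$ for all large $k$, Proposition 2.1 of \cite{cp} gives a uniform bound $\int_\Theta |u_k|^{\max\{p_k,q\}+\sigma}\,\mathrm dx \leq \big(\text{const}\big)\cdot\sup_k\|u_k\|^{\max\{p_k,q\}+\sigma} \leq C'$, because $(u_k)$ is bounded in $D_0^{1,2}(\Theta)^\phi$. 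Hence this contribution is at most $C\,C'\,|p_k-q| \to 0$. Combining the two regions and multiplying by the bound on $|c|$ yields $\lim_{k\to\infty}\int_\Theta c(x)\big(|u_k|^{p_k}-|u_k|^q\big)\,\mathrm dx = 0$.

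The only delicate point is making sure the exponent $\max\{p_k,q\}+\sigma$ stays below the critical threshold $2_{n-d}^\ast$ uniformly in $k$; this is where one must use $p_k\to q$ together with $q<2_{n-d}^\ast$ to pick $\sigma$ first and then discard finitely many $k$. Everything else is routine: the elementary inequality for $|a^s-a^t|$, the finiteness of $|\Theta|$, the continuity hence boundedness of $c$ on $\overline{\Theta}$, and the uniform higher-integrability bound from the compact Sobolev embedding of \cite{cp}. I would present the argument by writing the integral over $\Theta$ as the sum of the integrals over $\{|u_k|\leq 1\}$ and $\{|u_k|>1\}$ and treating each as above, with the logarithm absorbed into a power of $|u_k|$ in the second case. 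One could alternatively phrase it via the dominated convergence theorem applied to the sequence of functions $x\mapsto c(x)(|u_k(x)|^{p_k}-|u_k(x)|^q)$ along a subsequence, but the direct estimate is cleaner and avoids passing to subsequences.
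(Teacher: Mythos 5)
Your proposal is correct and follows essentially the same route as the paper: the mean value theorem (elementary inequality) for $\bigl||u_k|^{p_k}-|u_k|^{q}\bigr|$, a split into $\{|u_k|\le 1\}$ and $\{|u_k|>1\}$, absorption of the logarithm into a small power, and the uniform Sobolev bound from Proposition 2.1 of \cite{cp} together with the boundedness of $(u_k)$. The only cosmetic differences are that the paper dominates both regions by $C(|u_k|^{2}+|u_k|^{2_{n-d}^{\ast}})$ and uses the continuous embedding at the endpoint exponent, while you bound the small region by a constant via $|\Theta|<\infty$ and keep the large-region exponent strictly below $2_{n-d}^{\ast}$; also note that only continuity, not compactness, of the embedding is needed.
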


\begin{proof}
By the mean value theorem, for each $x\in\Theta,$ there exists $q_{k}(x)$
between $p_{k}$ and $q$ such that%
\[
\left\vert \left\vert u_{k}(x)\right\vert ^{p_{k}}-\left\vert u_{k}%
(x)\right\vert ^{q}\right\vert =\left\vert \ln\left\vert u_{k}(x)\right\vert
\right\vert \,\left\vert u_{k}(x)\right\vert ^{q_{k}(x)}\left\vert
p_{k}-q\right\vert .
\]
Fix $r>0$ such that $\left[  q-r,q+r\right]  \subset(2,2_{n-d}^{\ast}).$ Then,
for some positive constant $C$ and $k$ large enough,%
\[
\left\vert \ln\left\vert u_{k}\right\vert \right\vert \,\left\vert
u_{k}\right\vert ^{q_{k}}\leq\left\{
\begin{array}
[c]{lll}%
\ln\left\vert u_{k}\right\vert \,\left\vert u_{k}\right\vert ^{q+r} & \leq
C\left\vert u_{k}\right\vert ^{2_{n-d}^{\ast}} & \text{if }\left\vert
u_{k}\right\vert \geq1,\\
\left(  \ln\frac{1}{\left\vert u_{k}\right\vert }\right)  \,\left\vert
u_{k}\right\vert ^{q-r} & \leq C\left\vert u_{k}\right\vert ^{2} & \text{if
}\left\vert u_{k}\right\vert \leq1.
\end{array}
\right.
\]
As $D_{0}^{1,2}(\Theta)^{\phi}$ is continuously embedded in $L^{p}(\Theta)$
for $p\in\lbrack2,2_{n-d}^{\ast}],$ we obtain
\begin{align*}
\left\vert \int_{\Theta}c\left(  \left\vert u_{k}\right\vert ^{p_{k}%
}-\left\vert u_{k}\right\vert ^{q}\right)  \right\vert  &  \leq\left\vert
c\right\vert _{\infty}\left(  \int_{\left\vert u_{k}\right\vert \leq
1}\left\vert \left\vert u_{k}\right\vert ^{p_{k}}-\left\vert u_{k}\right\vert
^{q}\right\vert +\int_{\left\vert u_{k}\right\vert >1}\left\vert \left\vert
u_{k}\right\vert ^{p_{k}}-\left\vert u_{k}\right\vert ^{q}\right\vert \right)
\\
&  \leq\left\vert c\right\vert _{\infty}C\left\vert p_{k}-q\right\vert
\int_{\Theta}\left(  \left\vert u_{k}\right\vert ^{2}+\left\vert
u_{k}\right\vert ^{2_{n-d}^{\ast}}\right) \\
&  \leq\bar{C}\left\vert p_{k}-q\right\vert \left\Vert u_{k}\right\Vert
^{2_{n-d}^{\ast}}%
\end{align*}
for some positive constant $\bar{C},$ where $\left\vert c\right\vert _{\infty
}:=\sup_{x\in\Theta}\left\vert c(x)\right\vert .$ Since $(u_{k})$ is bounded
in $D_{0}^{1,2}(\Theta),$ our claim follows.
\end{proof}

\begin{proposition}
\label{prop:limits}For $q\in(2,2_{n-d}^{\ast})$ we have that%
\[
\lim_{p\rightarrow q}\ell_{p}^{\phi}=\ell_{q}^{\phi},\text{\qquad}%
\lim_{p\rightarrow q}t_{q,p}=1,\text{\qquad}\lim_{p\rightarrow q}J_{q}\left(
\widetilde{u}_{p}\right)  =\ell_{q}^{\phi}.
\]

\end{proposition}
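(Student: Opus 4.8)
The plan is to prove all three limits together by a continuity argument around $q$, combining Lemma~\ref{lem:p_to_q} with the identity $J_p(u)=(\tfrac12-\tfrac1p)\|u\|_{a,b}^2$ valid for $u\in\mathcal N_p^\phi$. I would fix an arbitrary sequence $p_k\to q$; since $q\in(2,2_{n-d}^\ast)$, one may assume the $p_k$ stay in a compact subinterval $I\subset(2,2_{n-d}^\ast)$, so by Theorem~\ref{thm:existence} a least energy solution $u_{p_k}$ exists for every $k$, and it suffices to establish the three assertions along $(p_k)$.

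I would begin with two uniform estimates. For the upper bound, take a least energy solution $u_q$ for $J_q$ and project it onto $\mathcal N_{p_k}^\phi$ as $s_ku_q$, where $s_k=(\|u_q\|_{a,b}^2/|u_q|_{c;p_k}^{p_k})^{1/(p_k-2)}$; applying Lemma~\ref{lem:p_to_q} to the constant sequence $u_q$ gives $|u_q|_{c;p_k}^{p_k}\to|u_q|_{c;q}^q=\|u_q\|_{a,b}^2$, hence $s_k\to1$ and
\[
\ell_{p_k}^\phi\le J_{p_k}(s_ku_q)=\Big(\tfrac12-\tfrac1{p_k}\Big)s_k^2\|u_q\|_{a,b}^2\longrightarrow\Big(\tfrac12-\tfrac1q\Big)\|u_q\|_{a,b}^2=\ell_q^\phi,
\]
so $(\ell_{p_k}^\phi)$ is bounded above. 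For the lower bound, on $\mathcal N_{p_k}^\phi$ one has $\|u\|_{a,b}^2=|u|_{c;p_k}^{p_k}\le|c|_\infty|u|_{p_k}^{p_k}$, and using H\"older's inequality and the continuous embedding $D_0^{1,2}(\Theta)^\phi\hookrightarrow L^r(\Theta)$ for a fixed $r$ with $q<r\le2_{n-d}^\ast$, together with the equivalence of $\|\cdot\|_{a,b}$ and $\|\cdot\|$, one obtains $\|u\|_{a,b}^2\le C\|u\|_{a,b}^{p_k}$ with $C$ bounded uniformly for $p_k\in I$; hence $\|u\|_{a,b}\ge c_1>0$ on $\mathcal N_{p_k}^\phi$ uniformly in $k$, and therefore $\ell_{p_k}^\phi=(\tfrac12-\tfrac1{p_k})\|u_{p_k}\|_{a,b}^2\ge c_2>0$. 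Consequently $\|u_{p_k}\|_{a,b}^2=\ell_{p_k}^\phi/(\tfrac12-\tfrac1{p_k})$ is bounded and bounded away from zero, so $(u_{p_k})$ is a bounded sequence in $D_0^{1,2}(\Theta)^\phi$.

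Next I would handle $t_{q,p_k}$. Since $(u_{p_k})$ is bounded, Lemma~\ref{lem:p_to_q} gives $|u_{p_k}|_{c;p_k}^{p_k}-|u_{p_k}|_{c;q}^q\to0$; as $|u_{p_k}|_{c;p_k}^{p_k}=\|u_{p_k}\|_{a,b}^2$ is bounded and bounded away from zero, the same holds for $|u_{p_k}|_{c;q}^q$, and hence the ratio in \eqref{eq:t} tends to $1$, that is, $t_{q,p_k}\to1$; this is the second limit. Finally, since $\widetilde u_{p_k}=t_{q,p_k}u_{p_k}\in\mathcal N_q^\phi$,
\[
J_q(\widetilde u_{p_k})=\Big(\tfrac12-\tfrac1q\Big)\|\widetilde u_{p_k}\|_{a,b}^2=t_{q,p_k}^2\,\frac{\tfrac12-\tfrac1q}{\tfrac12-\tfrac1{p_k}}\,\ell_{p_k}^\phi\ge\ell_q^\phi,
\]
the inequality holding because $\widetilde u_{p_k}\in\mathcal N_q^\phi$; letting $k\to\infty$ and using $t_{q,p_k}\to1$ yields $\liminf_k\ell_{p_k}^\phi\ge\ell_q^\phi$, which combined with the upper bound gives $\ell_{p_k}^\phi\to\ell_q^\phi$, the first limit, and then the displayed identity gives $J_q(\widetilde u_{p_k})\to\ell_q^\phi$, the third. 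Since $(p_k)$ was arbitrary, the proposition follows.

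The step I expect to be the real obstacle is the lower bound $\liminf_k\ell_{p_k}^\phi\ge\ell_q^\phi$ (equivalently, lower semicontinuity of $p\mapsto\ell_p^\phi$ at $q$): upper semicontinuity comes essentially for free by projecting the fixed minimizer $u_q$, but the reverse inequality requires projecting the moving minimizers $u_{p_k}$ onto the fixed Nehari manifold $\mathcal N_q^\phi$, and for that one must first secure the uniform-in-$k$ bounds — in particular the lower bound $\|u_{p_k}\|_{a,b}\ge c_1>0$ with $c_1$ independent of $k$ — that make Lemma~\ref{lem:p_to_q} applicable to the sequence $(u_{p_k})$.
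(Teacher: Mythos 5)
Your argument is correct, and it reaches all three limits, but it is organized differently from the paper's proof. The paper first rewrites $\ell_{p}^{\phi}=\frac{p-2}{2p}\bigl(S_{p}^{\phi}\bigr)^{p/(p-2)}$ in terms of the weighted Sobolev quotient $S_{p}^{\phi}:=\inf_{u\neq0}\|u\|_{a,b}^{2}/\left\vert u\right\vert _{c;p}^{2}$ and proves continuity of $p\mapsto S_{p}^{\phi}$ at $q$: the easy inequality comes from H\"older ($\left\vert u\right\vert _{c;q}\leq\left\vert c\right\vert _{1}^{(p-q)/pq}\left\vert u\right\vert _{c;p}$ for $p>q$), and the reverse one from a contradiction argument applied to normalized almost-minimizing sequences via Lemma \ref{lem:p_to_q}; this yields $\lim_{p\to q}\ell_{p}^{\phi}=\ell_{q}^{\phi}$ without ever invoking attainment of $\ell_{p}^{\phi}$, after which boundedness of $(u_{p})$, $t_{q,p}\to1$ and the third limit follow exactly as in your last two steps. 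You instead work directly with the minimizers: the upper bound by projecting the fixed minimizer $u_{q}$ onto the moving Nehari manifolds $\mathcal{N}_{p_{k}}^{\phi}$, and the lower bound by projecting $u_{p_{k}}$ onto $\mathcal{N}_{q}^{\phi}$ and using $J_{q}(\widetilde{u}_{p_{k}})\geq\ell_{q}^{\phi}$, which forces you to establish first the uniform positive lower bound for $\|\cdot\|_{a,b}$ on $\mathcal{N}_{p_{k}}^{\phi}$ (so that Lemma \ref{lem:p_to_q} applied to (\ref{eq:t}) gives $t_{q,p_{k}}\to1$ before you know $\ell_{p_{k}}^{\phi}\to\ell_{q}^{\phi}$). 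Both routes hinge on the same Lemma \ref{lem:p_to_q}; yours avoids the detour through $S_{p}^{\phi}$ and the contradiction argument but uses the existence results (Theorem \ref{thm:existence}) for all exponents near $q$, whereas the paper's continuity of $S_{p}^{\phi}$ is attainment-free and in that sense slightly more robust. One small point to tidy: in your uniform Nehari bound, the fixed exponent $r$ must satisfy $r\geq p_{k}$ for all large $k$ (e.g.\ take $r=2_{n-d}^{\ast}$, or any $r$ strictly larger than the right endpoint of your compact interval $I$), not merely $r>q$; with that choice the constant in $\|u\|_{a,b}^{2}\leq C\|u\|_{a,b}^{p_{k}}$ is indeed uniform and the rest goes through.
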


\begin{proof}
Set $S_{p}^{\phi}:=\inf_{u\in D_{0}^{1,2}(\Omega)^{\phi}\smallsetminus
\{0\}}\frac{\left\Vert u\right\Vert _{a,b}^{2}}{\left\vert u\right\vert
_{c;p}^{2}}$. It is easy to see that $\ell_{p}^{\phi}=\frac{p-2}{2p}\left(
S_{p}^{\phi}\right)  ^{\frac{p}{p-2}}.$ So, to prove the first identity, it
suffices to show that $\lim_{p\rightarrow q}S_{p}^{\phi}=S_{q}^{\phi}.$ From
H\"{o}lder's inequality we get that $\left\vert u\right\vert _{c;q}%
\leq\left\vert c\right\vert _{1}^{(p-q)/pq}\left\vert u\right\vert _{c;p}$ if
$p>q.$ Hence, $S_{q}^{\phi}\geq\left\vert c\right\vert _{1}^{2(q-p)/pq}%
S_{p}^{\phi}$ if $p>q.$ So, as $p$ approaches $q$ from the right, we have
that
\[
\limsup_{p\rightarrow q^{+}}S_{p}^{\phi}\leq S_{q}^{\phi}.
\]
Assume that $\liminf_{p\rightarrow q^{+}}S_{p}^{\phi}<S_{q}^{\phi}.$ Then,
there exist $\varepsilon>0$ and sequences $\left(  p_{k}\right)  $ in
$(q,2_{n-d}^{\ast})$ and $\left(  u_{k}\right)  $ in $D_{0}^{1,2}%
(\Omega)^{\phi}$ with $\left\vert u_{k}\right\vert _{c;p_{k}}=1$ such that
$\left\Vert u_{k}\right\Vert _{a,b}^{2}<S_{q}^{\phi}-\varepsilon.$ Lemma
\ref{lem:p_to_q} implies that $\frac{\left\Vert u_{k}\right\Vert _{a,b}^{2}%
}{\left\vert u_{k}\right\vert _{c;q}^{2}}<S_{q}^{\phi}$ for $k$ large enough,
contradicting the definition of $S_{q}^{\phi}$. This proves that%
\[
\lim_{p\rightarrow q^{+}}S_{p}^{\phi}=S_{q}^{\phi}.
\]
The corresponding statement when $p$ approaches $q$ from the left is proved in
a similar way. Since $J_{p}(u_{p})=\frac{p-2}{2p}\left\Vert u_{p}\right\Vert
_{a,b}^{2}=\ell_{p}^{\phi}$ we have that $(u_{p})$ is bounded in $D_{0}%
^{1,2}(\Omega)^{\phi}$ for $p$ close to $q.$ Lemma \ref{lem:p_to_q} applied to
(\ref{eq:t}) yields $\lim_{p\rightarrow q}t_{q,p}=1.$ It follows that
$\lim_{p\rightarrow q}J_{q}(\widetilde{u}_{p})=\lim_{p\rightarrow q}\frac
{q-2}{2q}\left\Vert t_{q,p}u_{p}\right\Vert _{a,b}^{2}=\ell_{q}^{\phi},$ as claimed.
\end{proof}

\section{Minimizing sequences for the critical problem}

\label{sec:min_crit}In this section we analize the behavior of the minimizing
sequences for the problem (\ref{anisotropic_prob}) when $p$ is the critical
exponent $2_{n}^{\ast}=\frac{2n}{n-2}$. The solutions to the limit problem
(\ref{limprob})\ will play a crucial role in this analysis. We denote the
energy functional associated to (\ref{limprob}) by
\[
J_{\infty}(u):=\frac{1}{2}\left\Vert u\right\Vert ^{2}-\frac{1}{2^{\ast}%
}\left\vert u\right\vert _{2^{\ast}}^{2^{\ast}}%
\]
and, for any closed subgroup $K$ of $\Gamma$, we set
\begin{align*}
D^{1,2}(\mathbb{R}^{n})^{\phi\mid K}  &  :=\{u\in D^{1,2}(\mathbb{R}%
^{n}):u(\gamma z)=\phi(\gamma)u(z)\ \forall\gamma\in K,\,z\in\mathbb{R}%
^{n}\},\\
\mathcal{N}_{\infty}^{\phi\mid K}  &  :=\{u\in D^{1,2}(\mathbb{R}^{n}%
)^{\phi\mid K}:u\neq0,\text{ }\left\Vert u\right\Vert ^{2}=\left\vert
u\right\vert _{2^{\ast}}^{2^{\ast}}\},\\
\ell_{\infty}^{\phi\mid K}  &  :=\inf_{u\in\mathcal{N}_{\infty}^{\phi\mid K}%
}J_{\infty}(u).
\end{align*}
If $K=\Gamma$ we write $\mathcal{N}_{\infty}^{\phi}$ and $\ell_{\infty}^{\phi
}$ instead of $\mathcal{N}_{\infty}^{\phi\mid K}$ and $\ell_{\infty}^{\phi\mid
K}.$

Recall that the $\Gamma$-orbit of a point $x\in\mathbb{R}^{n}$ is the set
$\Gamma x:=\{\gamma x:\gamma\in\Gamma\},$ and its isotropy group is
$\Gamma_{x}:=\{\gamma\in\Gamma:\gamma x=x\}.$ Then, $\Gamma x\ $is $\Gamma
$-homeomorphic to the homogeneous space $\Gamma/\Gamma_{x}.$ In particular,
the cardinality of $\Gamma x$ is the index of $\Gamma_{x}$ in $\Gamma,$ which
is usually denoted by $\left\vert \Gamma/\Gamma_{x}\right\vert .$ If $\Gamma
x=\{x\}$ then $x$ is said to be a fixed point of $\Gamma.$ We denote
\[
\Theta^{\Gamma}:=\{x\in\Theta:x\text{ is a fixed point of }\Gamma\}.
\]

For simplicity, we will write $J_{\ast},$ $\mathcal{N}_{\ast}^{\phi}$ and
$\ell_{\ast}^{\phi}$ instead of $J_{2_{n}^{\ast}},$ $\mathcal{N}_{2_{n}^{\ast
}}^{\phi}$ and $\ell_{2_{n}^{\ast}}^{\phi}.$

\begin{theorem}
\label{thm:concentration}Let $(u_{k})$ be a sequence in $\mathcal{N}_{\ast
}^{\phi}$ such that $J_{\ast}(u_{k})\rightarrow\ell_{\ast}^{\phi}$. Then,
after passing a subsequence, one of the following two possibilities occurs:

\begin{enumerate}
\item $(u_{k})$ converges strongly in $D_{0}^{1,2}(\Theta)$ to a minimizer of
$J_{\ast}$ on $\mathcal{N}_{\ast}^{\phi}.$

\item There exist a closed subgroup $K$ of finite index in $\Gamma$, a
sequence $(\zeta_{k})$ in $\Theta$, a sequence $(\varepsilon_{k})$ in
$(0,\infty)$ and a nontrivial solution $\omega$ to the problem
\emph{(\ref{limprob})} with the following properties:

\begin{enumerate}
\item $\Gamma_{\zeta_{k}}=K$ \ for all $k\in\mathbb{N}$, and $\zeta
_{k}\rightarrow\zeta,\smallskip$

\item $\varepsilon_{k}^{-1}$\emph{dist}$(\zeta_{k},\partial\Theta
)\rightarrow\infty$ and $\varepsilon_{k}^{-1}|\alpha\zeta_{k}-\beta\zeta
_{k}|\rightarrow\infty$ for all $\alpha,\beta\in\Gamma$ with $\alpha^{-1}%
\beta\not \in K,\smallskip$

\item $\omega(\gamma z)=\phi(\gamma)\omega(z)$ for all $\gamma\in K,$
$z\in\mathbb{R}^{n},$ and \ $J_{\infty}(\omega)=\ell_{\infty}^{\phi\mid
K},\smallskip$

\item $\lim\limits_{k\rightarrow\infty}\left\Vert u_{k}-\sum\limits_{\left[
\gamma\right]  \in\Gamma/K}\phi(\gamma)\left(  \frac{a(\zeta)}{c(\zeta
)}\right)  ^{\frac{n-2}{4}}\varepsilon_{k}^{\frac{2-n}{2}}(\omega\circ
\gamma^{-1})(\frac{\text{ }\cdot\text{ }-\gamma\zeta_{k}}{\varepsilon_{k}%
})\right\Vert =0,\smallskip$

\item $\ell_{\ast}^{\phi}=\min\limits_{x\in\overline{\Theta}}\frac{a(x)^{n/2}%
}{c(x)^{(n-2)/2}}\left\vert \Gamma/\Gamma_{x}\right\vert \ell_{\infty}%
^{\phi|\Gamma_{x}}=\frac{a(\zeta)^{n/2}}{c(\zeta)^{(n-2)/2}}\left\vert
\Gamma/K\right\vert J_{\infty}(\omega).$
\end{enumerate}
\end{enumerate}
\end{theorem}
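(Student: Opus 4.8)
The plan is to follow the classical profile-decomposition / concentration-compactness strategy adapted to the equivariant setting, in the spirit of Struwe's global compactness lemma and its symmetric refinements. Let $(u_k)\subset\mathcal N_\ast^\phi$ be a minimizing sequence for $J_\ast$. Since $J_\ast(u_k)=\frac1n\|u_k\|_{a,b}^2\to\ell_\ast^\phi$, the sequence $(u_k)$ is bounded in $D_0^{1,2}(\Theta)^\phi$, so up to a subsequence $u_k\rightharpoonup u_\infty$ weakly, and by Lemma \ref{lemma_crit} any weak limit is a (possibly trivial) solution of \eqref{anisotropic_prob} with $p=2_n^\ast$. If $u_k\to u_\infty$ strongly we are in case (1): $u_\infty\in\mathcal N_\ast^\phi$ and $J_\ast(u_\infty)=\ell_\ast^\phi$, so $u_\infty$ is a minimizer. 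The substance of the theorem is the analysis when the convergence fails; I would rule out $u_\infty\neq 0$ first (a nonzero weak limit would, by the Brezis--Lieb lemma applied to $\|\cdot\|_{a,b}^2$ and $|\cdot|_{c;2_n^\ast}^{2_n^\ast}$, split the energy into $J_\ast(u_\infty)$ plus a strictly positive bubble contribution, forcing $\ell_\ast^\phi$ to exceed itself — here one uses that $\ell_\infty^{\phi\mid K}>0$ for every admissible $K$, together with the weight-rescaling identity computing the energy of a bubble centered at a point $x$ in terms of $a(x),c(x)$), so $u_\infty=0$.

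Next I would extract the first bubble. By concentration-compactness (Lions), since $u_k\rightharpoonup 0$ but $\||u_k|^{2_n^\ast-2}u_k\|$ does not vanish, there are points $\zeta_k\in\overline\Theta$ and scales $\varepsilon_k\to 0$ such that the rescalings $v_k(z):=\varepsilon_k^{(n-2)/2}u_k(\zeta_k+\varepsilon_k z)$ converge weakly (and locally strongly) to a nontrivial limit $\omega\in D^{1,2}(\mathbb R^n)$. The freezing of the coefficients $a,c$ at $\zeta:=\lim\zeta_k$ (allowed because $a\in\mathcal C^1$, $c\in\mathcal C^0$) shows $\omega$ solves $-a(\zeta)\Delta\omega=c(\zeta)|\omega|^{2_n^\ast-2}\omega$, i.e. after the standard substitution $\omega\mapsto (a(\zeta)/c(\zeta))^{(n-2)/4}\omega$, a solution of the limit problem \eqref{limprob}. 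That $\varepsilon_k^{-1}\mathrm{dist}(\zeta_k,\partial\Theta)\to\infty$ is forced: otherwise the rescaled domain would be a half-space and the limit problem there has only the trivial solution, contradicting $\omega\neq 0$ — this gives (b), first part, and also shows $\zeta\in\overline\Theta$. The equivariance of $u_k$ under $\Gamma$ transfers to $\omega$: the isotropy groups $\Gamma_{\zeta_k}$ stabilize (passing to a further subsequence) to a fixed closed subgroup $K\leq\Gamma$, $\omega$ is $\phi\mid K$-equivariant, and the whole $\Gamma$-orbit of the bubble appears, producing the sum $\sum_{[\gamma]\in\Gamma/K}\phi(\gamma)(\cdots)$ in (d); the separation of distinct orbit points at scale $\varepsilon_k$, i.e. $\varepsilon_k^{-1}|\alpha\zeta_k-\beta\zeta_k|\to\infty$ for $\alpha^{-1}\beta\notin K$, is the geometric condition ensuring the translated bubbles are asymptotically orthogonal in $D^{1,2}$, so their energies add. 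Energy accounting then gives $\ell_\ast^\phi\geq \frac{a(\zeta)^{n/2}}{c(\zeta)^{(n-2)/2}}|\Gamma/K|\,J_\infty(\omega)\geq \frac{a(\zeta)^{n/2}}{c(\zeta)^{(n-2)/2}}|\Gamma/K|\,\ell_\infty^{\phi\mid K}\geq \min_{x\in\overline\Theta}\frac{a(x)^{n/2}}{c(x)^{(n-2)/2}}|\Gamma/\Gamma_x|\,\ell_\infty^{\phi\mid\Gamma_x}$.

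For the reverse inequality, and thus the exact value in (e) together with the fact that a single bubble exhausts the energy (no second bubble), I would construct test functions: pick $x_0\in\overline\Theta$ realizing the minimum, take a least-energy $\phi\mid\Gamma_{x_0}$-equivariant solution $\omega_0$ of \eqref{limprob} (its existence for the relevant symmetry groups being known, e.g. from \cite{c}), symmetrize the standard rescaled-and-truncated ansatz over $\Gamma/\Gamma_{x_0}$, project onto $\mathcal N_\ast^\phi$, and compute that the energy converges to $\frac{a(x_0)^{n/2}}{c(x_0)^{(n-2)/2}}|\Gamma/\Gamma_{x_0}|\,\ell_\infty^{\phi\mid\Gamma_{x_0}}$ as the concentration parameter goes to $0$ — the interaction terms between distinct orbit points being negligible precisely because those points are at positive mutual distance when $x_0$ is not itself a point of larger isotropy, and when $\Gamma_{x_0}$ is the full isotropy the single-point concentration is clean. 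This yields $\ell_\ast^\phi\leq\min_{x}(\cdots)$, closing the chain of inequalities to equalities; in particular all the intermediate $\geq$ become $=$, which forces $J_\infty(\omega)=\ell_\infty^{\phi\mid K}$ (giving (c)) and $\frac{a(\zeta)^{n/2}}{c(\zeta)^{(n-2)/2}}|\Gamma/K|$ to equal the minimum (the first equality in (e)), and the fact that the minimizing sequence splits into exactly one $\Gamma$-orbit of bubbles, yielding the strong convergence statement (d) with no remainder. I expect the main obstacle to be the careful bookkeeping in the equivariant iteration: showing that after subtracting the first $\Gamma$-orbit of bubbles the remainder is still a minimizing-type sequence whose energy has dropped by the full orbit contribution, and that this energy drop is already all of $\ell_\ast^\phi$ so the iteration terminates after one step — this requires the sharp Brezis--Lieb splitting combined with the strict positivity and the minimality of $\ell_\infty^{\phi\mid K}$, and the delicate part is handling the possibility that a second bubble sits at a point with smaller isotropy group (hence larger orbit and, a priori, comparable or smaller per-bubble energy), which is excluded exactly by the definition of $\ell_\ast^\phi$ as the infimum over the equivariant Nehari manifold together with the minimization identity in (e).
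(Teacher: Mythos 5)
Your strategy---an equivariant Struwe-type profile decomposition for the minimizing sequence, closed by a test-function upper bound that turns the chain of inequalities into the equalities in (e)---is in outline the same one underlying the paper's proof, which is in fact only a citation: Theorem \ref{thm:concentration} is proved by referring to the proof of Theorem 2.5 in \cite{cf}. However, as a standalone argument your sketch omits precisely the steps that make the equivariant statement nontrivial. You assert, rather than prove, the two key facts in part (2): that the isotropy group $K=\Gamma_{\zeta_k}$ has \emph{finite index} in $\Gamma$, and that $\varepsilon_k^{-1}|\alpha\zeta_k-\beta\zeta_k|\to\infty$ whenever $\alpha^{-1}\beta\notin K$. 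Neither follows from plain concentration-compactness: a concentration point could a priori lie on a positive-dimensional orbit (so that infinitely many ``copies'' of the bubble sit at mutual distances $o(\varepsilon_k)$ and the ``energies add, hence the orbit is finite'' argument collapses), or two distinct orbit points could approach each other at a rate comparable to $\varepsilon_k$. Excluding these scenarios requires a careful equivariant choice of the concentration points and scales and a comparison with the orbit geometry; this is the technical heart of the cited proof, and describing the separation as ``the geometric condition ensuring orthogonality'' presupposes the conclusion rather than establishing it. (Similarly, the reduction of an arbitrary minimizing sequence to one whose weak limit is a critical point needs an Ekeland-type step, which you use implicitly.)

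The second gap is in your upper bound $\ell_\ast^\phi\le\min_{x\in\overline\Theta}\frac{a(x)^{n/2}}{c(x)^{(n-2)/2}}\left\vert\Gamma/\Gamma_x\right\vert\ell_\infty^{\phi\mid\Gamma_x}$. You build test functions from a \emph{least-energy} $\phi\mid\Gamma_{x_0}$-equivariant entire solution, citing \cite{c} for its existence; but attainment of $\ell_\infty^{\phi\mid K}$ is not available in this generality---it is part of conclusion (c), produced by the blow-up itself---so the construction must be run with near-minimizers (fixable, but as written it is circular). More seriously, the minimum in (e) is taken over $\overline\Theta$: if the minimizing point lies on $\partial\Theta$, or is a limit of points with strictly smaller isotropy, your ansatz cannot be centered there (truncation inside $\Theta$ destroys the energy core), and centering at nearby admissible interior points changes both $\left\vert\Gamma/\Gamma_x\right\vert$ and $\ell_\infty^{\phi\mid\Gamma_x}$, so the energy of the symmetrized ansatz does not obviously converge to the claimed value; your sketch does not address this. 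A related bookkeeping point: at the end of your lower-bound chain you compare with the minimum at $\zeta$, but $\Gamma_\zeta$ may be strictly larger than $K$, so the comparison must be made along the points $\zeta_k$ (where the isotropy is exactly $K$) together with the continuity of $a$ and $c$. These are exactly the places where the proof in \cite{cf} does the work that your proposal leaves open.
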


\begin{proof}
The proof is exactly the same as that of Theorem 2.5 in \cite{cf}, omitting
the first two lines.
\end{proof}

Let us state an interesting special case of Theorem \ref{thm:concentration}.

\begin{corollary}
\label{cor:concentration}Assume that every $\Gamma$-orbit in $\Theta$ is
either infinite or a fixed point. Let $(u_{k})$ be a sequence in
$\mathcal{N}_{\ast}^{\phi}$ such that $J_{\ast}(u_{k})\rightarrow\ell_{\ast
}^{\phi}$. Then, after passing a subsequence, one of the following statements
holds true:

\begin{enumerate}
\item $(u_{k})$ converges strongly in $D_{0}^{1,2}(\Theta)$ to a minimizer of
$J_{\ast}$ on $\mathcal{N}_{\ast}^{\phi}.$

\item There exist a sequence $(\zeta_{k})$ in $\Theta^{\Gamma},$ a sequence
$(\varepsilon_{k})$ in $(0,\infty)$ and a nontrivial $\phi$-equivariant
solution $\omega$ to the limit problem \emph{(\ref{limprob})} such that
$\zeta_{k}\rightarrow\zeta\in\overline{\Theta},$ $\varepsilon_{k}^{-1}%
$\emph{dist}$(\zeta_{k},\partial\Theta)\rightarrow\infty,$ $J_{\infty}%
(\omega)=\ell_{\infty}^{\phi},$
\[
\lim_{k\rightarrow\infty}\left\Vert u_{k}-\left(  \frac{a(\zeta)}{c(\zeta
)}\right)  ^{\frac{n-2}{4}}\varepsilon_{k}^{\frac{2-n}{2}}\omega\left(
\frac{\text{ }\cdot\text{ }-\zeta_{k}}{\varepsilon_{k}}\right)  \right\Vert
=0,
\]
and%
\[
\frac{a(\zeta)^{n/2}}{c(\zeta)^{(n-2)/2}}=\min\limits_{x\in\overline
{\Theta^{\Gamma}}}\frac{a(x)^{n/2}}{c(x)^{(n-2)/2}}.
\]

\end{enumerate}

In particular, if every $\Gamma$-orbit in $\Theta$ has positive dimension,
then \emph{(1)}\ must hold true.
\end{corollary}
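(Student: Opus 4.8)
The plan is to obtain the corollary as a direct specialization of Theorem~\ref{thm:concentration}. First I would apply that theorem to the given minimizing sequence $(u_k)$: after passing to a subsequence, either its alternative (1) holds --- which is precisely alternative (1) of the corollary, so nothing further is needed --- or its alternative (2) holds, and the task is to show that, under the present hypothesis on $\Gamma$-orbits, that alternative collapses to the simpler alternative (2) of the corollary.

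So assume we are in alternative (2) of Theorem~\ref{thm:concentration}, with the closed subgroup $K\le\Gamma$ of finite index, the sequences $(\zeta_k)\subset\Theta$ and $(\varepsilon_k)\subset(0,\infty)$, and the nontrivial solution $\omega$ satisfying (a)--(e). The crux is to rule out $K\neq\Gamma$: by property (a), $\Gamma_{\zeta_k}=K$, so the orbit $\Gamma\zeta_k$ is $\Gamma$-homeomorphic to $\Gamma/K$ and hence \emph{finite}. Since $\zeta_k\in\Theta$ and, by hypothesis, every $\Gamma$-orbit in $\Theta$ is either infinite or a single fixed point, $\Gamma\zeta_k$ must be a fixed point; thus $\Gamma_{\zeta_k}=\Gamma$, i.e. $K=\Gamma$ and $\zeta_k\in\Theta^{\Gamma}$. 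With $K=\Gamma$ the quotient $\Gamma/K$ is trivial, so the sum in property (d) reduces to the single term $\bigl(a(\zeta)/c(\zeta)\bigr)^{(n-2)/4}\varepsilon_k^{(2-n)/2}\,\omega((\,\cdot\,-\zeta_k)/\varepsilon_k)$; property (c) says $\omega$ is $\phi$-equivariant with $J_\infty(\omega)=\ell_\infty^{\phi}$; property (b) gives $\varepsilon_k^{-1}\mathrm{dist}(\zeta_k,\partial\Theta)\to\infty$; and $\zeta_k\to\zeta$ with $\zeta\in\overline{\Theta^{\Gamma}}$ since each $\zeta_k\in\Theta^{\Gamma}$.

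It then remains to read the value of $a(\zeta)^{n/2}/c(\zeta)^{(n-2)/2}$ off property (e). With $|\Gamma/K|=1$, (e) gives $\ell_*^{\phi}=\dfrac{a(\zeta)^{n/2}}{c(\zeta)^{(n-2)/2}}\,\ell_\infty^{\phi}$; on the other hand, for any $x\in\overline{\Theta^{\Gamma}}$ one has $\Gamma_x=\Gamma$ (take limits in $\gamma x_j=x_j$ along a sequence $x_j\in\Theta^{\Gamma}$, $x_j\to x$), so the minimization in (e), together with $\overline{\Theta^{\Gamma}}\subset\overline{\Theta}$, forces $\ell_*^{\phi}\le\dfrac{a(x)^{n/2}}{c(x)^{(n-2)/2}}\,\ell_\infty^{\phi}$. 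Dividing both relations by $\ell_\infty^{\phi}>0$ and using $\zeta\in\overline{\Theta^{\Gamma}}$ yields $a(\zeta)^{n/2}/c(\zeta)^{(n-2)/2}=\min_{x\in\overline{\Theta^{\Gamma}}}a(x)^{n/2}/c(x)^{(n-2)/2}$, which is alternative (2) of the corollary. For the final assertion: if every $\Gamma$-orbit in $\Theta$ has positive dimension, then $\Theta$ contains no fixed point, so $\Theta^{\Gamma}=\emptyset$; but the argument above shows alternative (2) would require $\zeta_k\in\Theta^{\Gamma}$, which is impossible, so alternative (1) must hold.

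Since Theorem~\ref{thm:concentration} already carries all the analytic weight --- the concentration--compactness splitting, the profile decomposition, and the formula for $\ell_*^{\phi}$ --- no genuinely hard step remains; the only point demanding care is the group-theoretic bookkeeping, namely translating ``$\Gamma\zeta_k$ finite'' into ``$K=\Gamma$'' via the orbit hypothesis, after which each clause of the corollary is just the $K=\Gamma$ instance of the corresponding clause of the theorem.
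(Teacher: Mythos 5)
Your proposal is correct and follows the same route as the paper: identify the index of $K$ in $\Gamma$ with the cardinality of the orbit $\Gamma\zeta_k$, use the orbit hypothesis to force $K=\Gamma$ and $\zeta_k\in\Theta^{\Gamma}$, and then read off conclusion (2) from clauses (b)--(e) of Theorem~\ref{thm:concentration}. Your extra verification of the minimum formula over $\overline{\Theta^{\Gamma}}$ and of the final assertion is a correct elaboration of what the paper leaves implicit.
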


\begin{proof}
Since the group $K=\Gamma_{\zeta_{k}},$ given by case (2) of Theorem
\ref{thm:concentration}, has finite index in $\Gamma$ and this index is the
cardinality of the $\Gamma$-orbit of $\zeta_{k},$ our assumption implies that
$K=\Gamma$ and $\zeta_{k}$ is a fixed point. So case (2) of Theorem
\ref{thm:concentration} reduces to case (2) of this corollary.
\end{proof}

Note that the functions $a$ and $c$ determine the location of the
concentration point $\zeta.$

It was shown in \cite[Theorem 2.3]{c}\ that, if $a=c=1$, $b=0$ and
$\Theta^{\Gamma}\neq\emptyset,$ then $\ell_{\ast}^{\phi}$ is not attained by
$J_{\ast}$ on $\mathcal{N}_{\ast}^{\phi}.$ So, if every $\Gamma$-orbit in
$\Theta\smallsetminus\Theta^{\Gamma}$ has positive dimension, statement (2) of
Corollary \ref{cor:concentration} must hold true.

In the following section we will state a nonexistence result which allows us
to obtain information on the limit profile of solutions to the problem
(\ref{prob}).

\section{Blow-up at the higher critical exponents}

\label{sec:blow-up}Throughout this section we will assume that $\Theta$ is a
$\Gamma$-invariant bounded smooth domain in $\mathbb{R}^{n}$ whose closure is
contained in $\mathbb{R}^{n-1}\times(0,\infty).$ Then, the points in
$\{0\}\times(0,\infty)$ must be fixed points of $\Gamma,$ so $\mathbb{R}%
^{n-1}\times\{0\}$ is $\Gamma$-invariant and we may regard $\Gamma$ as a
subgroup of $O(n-1).$ We will also assume that $\Theta\smallsetminus
\Theta^{\Gamma}$ and $\Theta^{\Gamma}$ are nonempty, and that every $\Gamma
$-orbit in $\Theta\smallsetminus\Theta^{\Gamma}$ has positive dimension. As
before, $\phi:\Gamma\rightarrow\mathbb{Z}_{2}$ will be a continuous
homomorphism which satisfies assumption (\ref{A}).

We set%
\[
\Theta_{\delta}:=\{x\in\Theta:\text{dist}(x,\Theta^{\Gamma})>\delta\}\text{
\ if }\delta>0,\text{\qquad and\qquad}\Theta_{0}:=\Theta,
\]
and we fix $\delta_{0}>0$ such that $\Theta_{\delta_{0}}\neq\emptyset$. \ For
$m\geq1$ and $\delta\in\lbrack0,\delta_{0}),$ we consider the problem%
\[
(\wp_{\delta,p}^{\#})\qquad%
\begin{cases}
-\,\text{div}(x_{n}^{m}\nabla u)+\lambda x_{n}^{m}u=x_{n}^{m}|u|^{p-2}u &
\text{in $\Theta_{\delta}$,}\\
u=0 & \text{on $\partial\Theta_{\delta}$.}\\
u(\gamma x)=\phi(\gamma)u(x), & \forall\gamma\in\Gamma\text{, \ }x\in
\Theta_{\delta},
\end{cases}
\]
where $x_{n}^{m}$ denotes the function $x=(x_{1},...,x_{n})\mapsto x_{n}^{m},$
and $\lambda\in(-\lambda_{1}^{\phi},\infty),$ with $\lambda_{1}^{\phi}$\ as
defined in (\ref{eq:lambda1}). Then, the operator $-\,$div$(x_{n}^{m}%
\nabla)+\lambda x_{n}^{m}$ is coercive in $D_{0}^{1,2}(\Theta)^{\phi}.$ So the
data of this problem satisfy all assumptions stated at the beginning of
Section \ref{sec:existence}.

Theorem \ref{thm:existence}\ asserts that the problem $(\wp_{\delta,p}^{\#})$
has a least energy solution $u_{\delta,p}$ if $\delta\in(0,\delta_{0})$ and
$p\in(2,2_{n-\mathfrak{d}}^{\ast}),$ where
\[
\mathfrak{d}:=\min\{\dim(\Gamma x):x\in\Theta\smallsetminus\Theta^{\Gamma}\}.
\]
Note that, by assumption, $\mathfrak{d}>0.$ On the other hand, for $\delta=0$
and $p=2_{n}^{\ast},$ the following nonexistence result was proved in
\cite{cf}.

\begin{theorem}
\label{thm:cf}If \emph{dist}$(\Theta^{\Gamma},\mathbb{R}^{n-1}\times
\{0\})=\,$\emph{dist}$(\Theta,\mathbb{R}^{n-1}\times\{0\}),$ then there exists
$\lambda_{\ast}\in(-\lambda_{1}^{\phi},0]$ such that, if $\lambda\in
(\lambda_{\ast},\infty)\cup\{0\},$ the critical problem $(\wp_{0,2_{n}^{\ast}%
}^{\#})$ does not have a least energy solution.

Moreover, $\lambda_{\ast}<0$ if $m\geq2.$
\end{theorem}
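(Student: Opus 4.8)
The plan is to argue by contradiction, following the scheme that leads to Corollary \ref{cor:concentration} but exploiting the specific structure of the weight $f(x)=x_n^m$. Suppose the critical problem $(\wp_{0,2_{n}^{\ast}}^{\#})$ had a least energy solution $u$. Since $\Theta\smallsetminus\Theta^{\Gamma}$ consists only of orbits of positive dimension, Corollary \ref{cor:concentration} applies with $a(x)=c(x)=x_n^m$ and $b(x)=\lambda x_n^m$: the only way a minimizer can exist is case (1), so there is nothing contradictory \emph{yet} --- the contradiction must instead come from a nonexistence mechanism \`a la Passaseo/Pohozaev. So the real strategy is: first show that if $\lambda\ge 0$ (or, more generally, $\lambda$ not too negative), then any nontrivial solution of $(\wp_{0,2_n^\ast}^{\#})$ that is $\phi$-equivariant and attains $\ell_\ast^\phi$ would have to concentrate --- i.e. one shows directly that $\ell_\ast^\phi$ equals the "limit" value $\min_{x\in\overline\Theta}\frac{a(x)^{n/2}}{c(x)^{(n-2)/2}}|\Gamma/\Gamma_x|\,\ell_\infty^{\phi|\Gamma_x}$ and that this value is \emph{never attained}. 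Because $a=c$, that minimum simplifies: $\frac{a(x)^{n/2}}{c(x)^{(n-2)/2}}=x_n^m$, so $\ell_\ast^\phi=\min_{x\in\overline{\Theta^\Gamma}}x_n^m\cdot \ell_\infty^\phi$, the minimum being taken over fixed points because on $\Theta\smallsetminus\Theta^\Gamma$ the orbit factor $|\Gamma/\Gamma_x|$ is "infinite" (positive-dimensional orbits) and hence those points cannot compete.

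The key steps, in order, are as follows. \emph{Step 1 (energy identification).} Using Corollary \ref{cor:concentration} and a test-function computation (concentrating a $\phi$-equivariant bubble-like profile $\omega$, a minimizer for $\ell_\infty^\phi$, near the fixed point $x^\ast\in\overline{\Theta^\Gamma}$ realizing $\min x_n^m$), show that $\ell_\ast^\phi=(x^\ast_n)^m\,\ell_\infty^\phi$ exactly when $\mathrm{dist}(\Theta^\Gamma,\mathbb R^{n-1}\times\{0\})=\mathrm{dist}(\Theta,\mathbb R^{n-1}\times\{0\})$, i.e. when the fixed-point set reaches the overall minimum of $x_n$; this is where the distance hypothesis is used. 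The $\lambda$-term contributes a lower-order correction to the test energy, which is why for $\lambda\ge 0$ the identification survives, and why one must \emph{restrict} $\lambda$ to an interval $(\lambda_\ast,\infty)$ where the correction does not lower the infimum enough to make it attainable. \emph{Step 2 (non-attainment).} Suppose $u$ achieves $\ell_\ast^\phi$. Rule out case (1) of Corollary \ref{cor:concentration} by a Pohozaev-type identity on $\Theta$ with the weight $x_n^m$: multiply the equation by $x\cdot\nabla u$ (or by a suitable vector field adapted to the dilation structure), integrate by parts, and use that $\overline\Theta$ is contained in the half-space and that $x^\ast$ lies on the boundary relative to the level sets of $x_n^m$; the boundary/weight terms force a sign that is incompatible with $u\not\equiv 0$ when $\lambda\ge 0$. (This is exactly the mechanism behind Passaseo's nonexistence in \cite{p1,p2}, adapted to the symmetric setting and the weight.) \emph{Step 3 (the $\lambda$ threshold and the case $m\ge 2$).} Track the $\lambda$-dependence: the Pohozaev/energy argument goes through for $\lambda$ above some threshold $\lambda_\ast\le 0$; to see $\lambda_\ast<0$ when $m\ge 2$, refine the test-function estimate --- the weight $x_n^m$ with $m\ge 2$ produces an extra favorable term (from the curvature of $x_n^m$ at the concentration point) that dominates the negative $\lambda u^2$ contribution for small $|\lambda|$, giving strict non-attainment on a genuinely larger interval. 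This quantitative expansion is essentially the same computation as in \cite{cf}, so I would cite it.

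The main obstacle is \textbf{Step 2}: making the Pohozaev identity with the degenerate/anisotropic weight $x_n^m$ actually yield a contradiction. One must choose the deformation vector field carefully so that the weight terms and the boundary terms combine with a definite sign, and this is delicate precisely because $\Theta_\delta$ with $\delta=0$ may touch (through its defining condition) the locus where the "hole" was --- here there is no hole, but the competition between the interior orbits (positive-dimensional, hence energetically penalized by $|\Gamma/\Gamma_x|=\infty$) and the fixed locus is what must be leveraged. A clean way to finesse this is to observe that a least-energy $\phi$-equivariant solution on $\Theta$, after the identification in Step 1, would descend to a least-energy solution of the \emph{limit} problem on a half-space-like domain with the weight, and then invoke the known nonexistence there; I expect the cleanest write-up routes the contradiction through \cite[Theorem 2.3]{c} and a weighted Pohozaev argument rather than re-deriving everything. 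The remaining steps (the test-function upper bound for $\ell_\ast^\phi$, the lower bound from Corollary \ref{cor:concentration}, and the continuity in $\lambda$) are routine once Step 2 is in place.
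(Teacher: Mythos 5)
The paper itself gives no argument for this theorem: its ``proof'' is exactly the citation of Theorem 3.2 in \cite{cf}. So your proposal has to be judged as a self-contained reconstruction of that result, and as such it has a genuine gap precisely at the point you yourself flag as the main obstacle, Step 2. The mechanism you propose there --- a Pohozaev identity with the field $x\cdot\nabla u$ (or a variant) on $\Theta$ with the weight $x_n^m$ --- is not viable at the stated level of generality. The theorem assumes only that $\Theta$ is a smooth bounded $\Gamma$-invariant domain with $\mathrm{dist}(\Theta^{\Gamma},\mathbb{R}^{n-1}\times\{0\})=\mathrm{dist}(\Theta,\mathbb{R}^{n-1}\times\{0\})$; there is no star-shapedness hypothesis, so the boundary term $\int_{\partial\Theta}(x\cdot\nu)\,x_n^m|\partial_\nu u|^2$ (and its analogues for other deformation fields) has no sign, and Passaseo's argument in \cite{p1,p2}, which you invoke, really does use that the reduced domain is a ball. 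Moreover, a Pohozaev identity, if it closed, would exclude \emph{all} nontrivial solutions, which is strictly more than the statement asserts and is not to be expected for critical problems in general domains of this type; the theorem is deliberately phrased only for \emph{least energy} ($\phi$-equivariant) solutions, which signals that the intended mechanism is non-attainment of the infimum $\ell_\ast^{\phi}$, not an integral identity killing every solution.

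The other steps do not fill this hole. The identity you posit in Step 1, namely $\ell_\ast^{\phi}=d^m\,\ell_\infty^{\phi}$ with $d:=\mathrm{dist}(\Theta,\mathbb{R}^{n-1}\times\{0\})$, cannot be extracted from Theorem \ref{thm:concentration}(2)(e) or Corollary \ref{cor:concentration}, since those identities are established only along non-convergent minimizing sequences; the upper bound $\ell_\ast^{\phi}\le d^m\ell_\infty^{\phi}$ does follow from a test-function expansion at a fixed point at minimal height (this is where the distance hypothesis and the restriction on $\lambda$ enter, and where the $m\ge2$ improvement $\lambda_\ast<0$ must come from), but the complementary assertion that this value is \emph{not attained} is the whole content of the theorem, and a naive comparison using only $x_n^m\ge d^m$ fails because the weight appears both in the gradient term and in the constraint, so the weighted Rayleigh-type quotient is not monotone in the weight. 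Finally, Steps 1 and 3 defer the quantitative expansions and the $\lambda$-threshold to ``the same computation as in \cite{cf}'', which makes the proposal circular as a proof: it reduces to the very reference the paper quotes. To make this a proof you would need to carry out the non-attainment argument explicitly (for instance, assuming a minimizer exists and deriving a strict inequality $\ell_\ast^{\phi}>d^m\ell_\infty^{\phi}$ from $x_n>d$ in $\Theta$ together with the non-attainment of the corresponding equivariant limit infimum, in the spirit of \cite[Theorem 2.3]{c}), rather than appeal to a Pohozaev identity or to \cite{cf} itself.
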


\begin{proof}
See Theorem 3.2 in \cite{cf}.
\end{proof}

For $\delta\in(0,\delta_{0})$ and $p\in(2,2_{n-\mathfrak{d}}^{\ast}),$ let
$J_{\delta,p}:D_{0}^{1,2}(\Theta_{\delta})^{\phi}\rightarrow\mathbb{R}$ be the
variational funcional and $\mathcal{N}_{\delta,p}^{\phi}$ be the Nehari
manifold associated to the problem $(\wp_{\delta,p}^{\#}),$ and set
\[
\ell_{\delta,p}^{\phi}:=\inf\{J_{\delta,p}(u):u\in\mathcal{N}_{\delta,p}%
^{\phi}\}.
\]
We write $J_{\ast},$ $\mathcal{N}_{\ast}^{\phi}$ and $\ell_{\ast}^{\phi}$ for
the variational functional, the Nehari manifold and the infimum associated to
the critical problem $(\wp_{0,2_{n}^{\ast}}^{\#})$ in the whole domain
$\Theta.$ Extending each function in $\mathcal{N}_{\delta,2_{n}^{\ast}}^{\phi
}$ by $0$ outside of $\Theta_{\delta},$ we have that $\mathcal{N}%
_{\delta,2_{n}^{\ast}}^{\phi}\subset\mathcal{N}_{\ast}^{\phi}$ and
$J_{\delta,2_{n}^{\ast}}(u)=J_{\ast}(u)$ for every $u\in\mathcal{N}%
_{\delta,2_{n}^{\ast}}^{\phi}.$ Hence, $\ell_{\ast}^{\phi}\leq\ell
_{\delta,2_{n}^{\ast}}^{\phi}.$

\begin{lemma}
\label{lem:delta_to_0}$\ell_{\delta,2_{n}^{\ast}}^{\phi}\rightarrow\ell_{\ast
}^{\phi}$ as $\delta\rightarrow0.$
\end{lemma}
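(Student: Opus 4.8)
The plan is to show the two inequalities $\ell_{\ast}^{\phi}\leq\liminf_{\delta\to 0}\ell_{\delta,2_{n}^{\ast}}^{\phi}$ and $\limsup_{\delta\to 0}\ell_{\delta,2_{n}^{\ast}}^{\phi}\leq\ell_{\ast}^{\phi}$. The first inequality is already recorded before the lemma: extending by zero gives $\mathcal{N}_{\delta,2_{n}^{\ast}}^{\phi}\subset\mathcal{N}_{\ast}^{\phi}$ with the energies matching, hence $\ell_{\ast}^{\phi}\leq\ell_{\delta,2_{n}^{\ast}}^{\phi}$ for every $\delta\in(0,\delta_{0})$, and in particular the $\liminf$ bound follows. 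So the whole content is the reverse inequality: given $\varepsilon>0$, I want to produce, for all small $\delta$, a function $u_{\delta}\in\mathcal{N}_{\delta,2_{n}^{\ast}}^{\phi}$ (i.e. supported in $\Theta_{\delta}$) with $J_{\ast}(u_{\delta})\leq\ell_{\ast}^{\phi}+\varepsilon$.

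The key step is an approximation/truncation argument. First I would fix $u\in\mathcal{N}_{\ast}^{\phi}$ with $J_{\ast}(u)\leq\ell_{\ast}^{\phi}+\varepsilon/2$; by density I may take $u\in\mathcal{C}_{c}^{\infty}(\Theta)^{\phi}$ after a small perturbation (the $\phi$-equivariant smooth functions are dense in $D_{0}^{1,2}(\Theta)^{\phi}$ — one can symmetrize a smooth approximation via the Haar-average operator $\psi\mapsto\widetilde\psi$ used in the proof of Lemma \ref{lemma_crit}). Then I would multiply $u$ by a cutoff $\chi_{\delta}$ that vanishes on a $\delta$-neighborhood of $\Theta^{\Gamma}$, equals $1$ outside a $2\delta$-neighborhood, and satisfies $|\nabla\chi_{\delta}|\leq C/\delta$; since $\Theta^{\Gamma}$ is contained in the fixed-point set of $\Gamma$ and $u$ is $\Gamma$-invariant-type, $\chi_{\delta}u$ remains $\phi$-equivariant provided $\chi_{\delta}$ is chosen $\Gamma$-invariant (e.g. a function of $\mathrm{dist}(\cdot,\Theta^{\Gamma})$). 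The crucial estimate is that $\|\chi_{\delta}u-u\|_{a,b}\to 0$ and $|\chi_{\delta}u-u|_{c;2_{n}^{\ast}}\to 0$ as $\delta\to 0$: the potential and zeroth-order terms are controlled by dominated convergence, while the gradient term $\int|u|^{2}|\nabla\chi_{\delta}|^{2}\leq C\delta^{-2}\int_{\{\delta<\mathrm{dist}<2\delta\}}|u|^{2}$ is handled because $\Theta^{\Gamma}$, being the fixed-point set of a subgroup of $O(n-1)$ acting with only infinite or fixed orbits on $\Theta\setminus\Theta^{\Gamma}$, is a linear subspace of codimension $\geq\mathfrak{d}\geq 2$ (indeed $\geq 1$ suffices once one uses that $u$ is smooth and bounded, so $\int_{\{\delta<\mathrm{dist}<2\delta\}}|u|^{2}=O(\delta^{\mathrm{codim}})$). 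Having $\chi_{\delta}u\to u$ in $D_{0}^{1,2}(\Theta)^{\phi}$, I rescale $\chi_{\delta}u$ by the unique $t_{\delta}>0$ with $t_{\delta}\chi_{\delta}u\in\mathcal{N}_{\ast}^{\phi}$; then $t_{\delta}\to 1$ and $J_{\ast}(t_{\delta}\chi_{\delta}u)\to J_{\ast}(u)\leq\ell_{\ast}^{\phi}+\varepsilon/2$, so for $\delta$ small this is $\leq\ell_{\ast}^{\phi}+\varepsilon$. Since $t_{\delta}\chi_{\delta}u$ is supported in $\Theta_{\delta}$ and $\phi$-equivariant, it lies in $\mathcal{N}_{\delta,2_{n}^{\ast}}^{\phi}$, giving $\ell_{\delta,2_{n}^{\ast}}^{\phi}\leq\ell_{\ast}^{\phi}+\varepsilon$.

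The main obstacle is the codimension bookkeeping for the cutoff: one must verify that a $\delta$-neighborhood of $\Theta^{\Gamma}$ has the right measure decay so that the $|\nabla\chi_{\delta}|^{2}$ term disappears in the limit. This is where the structural hypothesis — $\Gamma\subset O(n-1)$, $\Theta^{\Gamma}$ nonempty, and every orbit in $\Theta\setminus\Theta^{\Gamma}$ of positive dimension — is used: it forces $\Theta^{\Gamma}$ to be (locally) a proper linear subspace, so its tubular neighborhoods shrink like a positive power of $\delta$, and for a bounded $u$ the quantity $\delta^{-2}\!\int_{\{\delta<\mathrm{dist}(\cdot,\Theta^{\Gamma})<2\delta\}}|u|^{2}$ is $O(\delta^{\mathrm{codim}-2})\cdot\|u\|_{\infty}^{2}$; when $\mathrm{codim}\geq 2$ this is bounded, and a finer argument (using that $u$ also vanishes to some order, or simply splitting $\int_{\{\mathrm{dist}<2\delta\}}|\nabla u|^{2}\to 0$ and absorbing the cross term) upgrades "bounded" to "$\to 0$". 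Alternatively, and more cleanly, I can avoid the sharp power count by using a logarithmic cutoff $\chi_{\delta}=\min\{1,\max\{0,\log(\mathrm{dist}/\delta^{2})/\log(1/\delta)\}\}$, for which $\int|\nabla\chi_{\delta}|^{2}$ is estimated by $(\log(1/\delta))^{-2}\int_{\{\delta^{2}<\mathrm{dist}<\delta\}}\mathrm{dist}^{-2}$, and a codimension $\geq 2$ set makes $\int_{\{\delta^{2}<\mathrm{dist}<\delta\}}\mathrm{dist}^{-2}=O(\log(1/\delta))$, so the whole gradient error is $O(1/\log(1/\delta))\to 0$. Either way the lemma follows; the argument is essentially that of the capacity-type estimate showing that sets of codimension $\geq 2$ are removable for $D^{1,2}$.
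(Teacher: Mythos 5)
Your overall strategy is the same as the paper's: keep the easy inequality $\ell_{\ast}^{\phi}\leq\ell_{\delta,2_{n}^{\ast}}^{\phi}$ from extension by zero, and for the reverse inequality multiply a near-minimizer by a $\Gamma$-invariant cutoff that kills a small neighborhood of the fixed-point set and then rescale onto the Nehari manifold, the whole point being a capacity-type estimate in codimension $\geq2$. The paper writes $\mathbb{R}^{n}=X\oplus Y$ with $X=(\mathbb{R}^{n})^{\Gamma}$, notes $\dim Y\geq2$, and uses radial cutoffs $\chi_{k}$ in $Y$ built as harmonically weighted sums of rescaled bumps, which is precisely the discrete version of your logarithmic cutoff; so your ``alternatively, and more cleanly'' variant is essentially the published proof (with the cosmetic difference that you cut off near $\Theta^{\Gamma}$ while the paper cuts off the whole tube $\{|y|\leq1/k\}$, and a trivial reindexing/monotonicity remark is needed because your cutoff vanishes only on the $\delta^{2}$-neighborhood).

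Two of your intermediate claims, however, are wrong and worth correcting. First, $\mathfrak{d}\geq2$ is false in general: $\mathfrak{d}=1$ in Theorem \ref{thm:main2} for $n=5$ and in Theorem \ref{thm:main1} when $n=3$. What is true, and what you actually need, is $\operatorname{codim}\Theta^{\Gamma}=\dim Y\geq\mathfrak{d}+1\geq2$, because the orbit of a point $(x,y)$ with $y\neq0$ is $\{x\}\times\Gamma y$, contained in a sphere of $Y$ of dimension $\dim Y-1$; this is how the paper gets $\dim Y\geq2$. Second, your primary variant with the linear cutoff $|\nabla\chi_{\delta}|\leq C/\delta$ genuinely fails in the borderline case $\operatorname{codim}=2$: the error $\delta^{-2}\int_{\{\delta<\mathrm{dist}<2\delta\}}|u|^{2}$ is only $O(1)$, and neither suggested patch repairs it --- the competitor has no reason to vanish on $\Theta^{\Gamma}$ (it is an interior set), and the splitting/cross-term remark only controls the term $(1-\chi_{\delta})\nabla u$, not $u\nabla\chi_{\delta}$. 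So the logarithmic cutoff is not an optional refinement but the step that makes the proof work, exactly as in the paper; also the claim that ``$\operatorname{codim}\geq1$ suffices'' should be dropped. On the positive side, your preliminary reduction (via Haar averaging and density) to a bounded, smooth, compactly supported $\phi$-equivariant competitor is a sensible addition: it is what justifies $\int u^{2}|\nabla\chi|^{2}\to0$ cleanly, a point the paper passes over rather quickly when it applies the estimate to a general $\psi\in\mathcal{N}_{\ast}^{\phi}$.
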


\begin{proof}
Let $X:=(\mathbb{R}^{n})^{\Gamma}$ and $Y$ be its orthogonal complement in
$\mathbb{R}^{n}$. Since $\Theta\smallsetminus\Theta^{\Gamma}\neq\emptyset$ and
every $\Gamma$-orbit in $\Theta\smallsetminus\Theta^{\Gamma}$ has positive
dimension, we have that $\dim(Y)\geq2.$

We claim that there are radial functions $\chi_{k}\in\mathcal{C}_{c}^{\infty
}(Y)$ such that $\chi_{k}(y)=1$ if $\left\vert y\right\vert \leq\frac{1}{k},$
\begin{equation}
\lim_{k\rightarrow\infty}\int_{Y}\left\vert \chi_{k}\right\vert ^{2}%
=0\text{\qquad and\qquad}\lim_{k\rightarrow\infty}\int_{Y}\left\vert
\nabla\chi_{k}\right\vert ^{2}=0.\label{eq:lims}%
\end{equation}
To show this, we choose a radial function $g\in\mathcal{C}_{c}^{\infty}(Y)$
such that $g(y)=1$ if $\left\vert y\right\vert \leq1$ and $g(y)=0$ if
$\left\vert y\right\vert \geq2,$ and we set $g_{k}(y):=g(ky).$ Define
\[
\chi_{k}(y):=\frac{1}{\sigma_{k}}\sum_{j=1}^{k}\frac{g_{j}(y)}{j},\text{\qquad
where }\sigma_{k}:=\sum_{j=1}^{k}\frac{1}{j}.
\]
Clearly, $\chi_{k}(y)=1$ if $\left\vert y\right\vert \leq\frac{1}{k}$ and
$\chi_{k}(y)=0$ if $\left\vert y\right\vert \geq2.$ As $\dim(Y)\geq2,$ we have
that $\int_{Y}\left\vert \nabla g_{k}\right\vert ^{2}\leq\int_{Y}\left\vert
\nabla g\right\vert ^{2}.$ Hence, for some positive constant $C$,
\[
\int_{Y}\left\vert \nabla\chi_{k}\right\vert ^{2}\leq\frac{C}{\sigma_{k}^{2}%
}\sum_{j=1}^{k}\frac{1}{j^{2}}\rightarrow0\text{\qquad as }k\rightarrow\infty.
\]
Finally, as all functions $\chi_{k}$ are supported in the closed ball of
radius $2$ in $Y,$ the Poincar\'{e} inequality yields
\[
\int_{Y}\left\vert \chi_{k}\right\vert ^{2}\leq C\int_{Y}\left\vert \nabla
\chi_{k}\right\vert ^{2}\rightarrow0,
\]
and our claim is proved.

Given $\varepsilon>0$ we choose $\psi\in\mathcal{N}_{\ast}^{\phi}$ such that
$J_{\ast}(\psi)<\ell_{\ast}^{\phi}+\frac{\varepsilon}{2}.$ For $(x,y)\in
X\times Y,$ we define $\psi_{k}(x,y):=(1-\chi_{k}(y))\psi(x,y).$ Note that, as
$\chi_{k}$ is radial and $\psi$ is is $\phi$-equivariant, $\psi_{k}$ is also
$\phi$-equivariant$.$ Moreover, the identities (\ref{eq:lims}) easily imply
that $\psi_{k}\rightarrow\psi$ in $D_{0}^{1,2}(\Theta).$ So, for $k$ large
enough, there exists $t_{k}\in(0,\infty)$ such that $\widetilde{\psi}%
_{k}:=t_{k}\psi_{k}\in\mathcal{N}_{\ast}^{\phi}$ and $t_{k}\rightarrow1.$
Hence, $\widetilde{\psi}_{k}\rightarrow\psi$ in $D_{0}^{1,2}(\Theta),$ and we
may choose $k_{0}\in\mathbb{N}$ such that $J_{\ast}(\widetilde{\psi}_{k_{0}%
})<\ell_{\ast}^{\phi}+\varepsilon.$ Observe that supp$(\widetilde{\psi}%
_{k})=\,$supp$(\psi_{k})\subset\Theta_{\delta}$ if $\delta<\frac{1}{k}.$ So
$\widetilde{\psi}_{k}\in\mathcal{N}_{\delta,2_{n}^{\ast}}^{\phi}$ if
$\delta<\frac{1}{k}.$ It follows that
\[
\ell_{\ast}^{\phi}\leq\ell_{\delta,2_{n}^{\ast}}^{\phi}\leq J_{\delta
,2_{n}^{\ast}}(\widetilde{\psi}_{k_{0}})=J_{\ast}(\widetilde{\psi}_{k_{0}%
})<\ell_{\ast}^{\phi}+\varepsilon\text{\qquad}\forall\delta\in\left(
0,\frac{1}{k_{0}}\right)  .
\]
This finishes the proof.
\end{proof}

Set $N:=n+m$ and
\[
\Omega_{\delta}:=\{(y,z)\in\mathbb{R}^{n-1}\times\mathbb{R}^{m+1}%
:(y,\left\vert z\right\vert )\in\Theta_{\delta}\},\text{\qquad}\delta
\in\lbrack0,\delta_{0}).
\]
Note that $\Omega_{\delta}$ is $\left[  \Gamma\times O(m+1)\right]
$-invariant, i.e., $(\gamma y,\varrho z)\in\Omega_{\delta}$ for every
$(y,z)\in\Omega_{\delta},$ $\gamma\in\Gamma,$ $\varrho\in O(m+1).$ A
straighforward computation shows that $u_{\delta,p}$ is a least energy
solution to the problem $(\wp_{\delta,p}^{\#})$\ if and only if $v_{\delta
,p}(y,z):=u_{\delta,p}(y,\left\vert z\right\vert )$ is a least energy solution
to the problem%
\[
(\wp_{\delta,p})\qquad%
\begin{cases}
-\Delta v+\lambda v=|v|^{p-2}v & \text{in }\Omega\text{$_{\delta}$,}\\
v=0 & \text{on $\partial$}\Omega\text{$_{\delta}$,}\\
v(\gamma y,\varrho z)=\phi(\gamma)v(y,z), & \forall\gamma\in\Gamma,\text{
}\varrho\in O(m+1),\text{\ }(y,z)\in\Omega\text{$_{\delta}.$}%
\end{cases}
\]
Therefore, for every $\lambda\in(-\lambda_{1}^{\phi},\infty),$ $\delta
\in(0,\delta_{0})$ and $p\in(2,2_{n-\mathfrak{d}}^{\ast}),$ the problem
$(\wp_{\delta,p})$ has a least energy solution. The following results describe
its limit profile.

\begin{theorem}
\label{thm:critical_profile}For $\delta\in(0,\delta_{0})$ let $v_{\delta,\ast
}$ be a least energy solution to the problem $(\wp_{\delta,2_{N,m}^{\ast}}).$
Assume that
\[
\text{\emph{dist}}(\Theta^{\Gamma},\mathbb{R}^{n-1}\times
\{0\})=\text{\emph{dist}}(\Theta,\mathbb{R}^{n-1}\times\{0\}).
\]
Then, there exists $\lambda_{\ast}\leq0$ such that, if $\lambda\in
(\lambda_{\ast},\infty)\cup\{0\},$ there exist sequences $(\delta_{k})$ in
$(0,\delta_{0}),$ $(\varepsilon_{k})$ in $(0,\infty),$ $(\zeta_{k})$ in
$\Theta^{\Gamma},$ and a nontrivial solution $\omega$ to the limit problem
\emph{(\ref{limprob})} such that

\begin{enumerate}
\item[(i)] $\delta_{k}\rightarrow0,$ $\varepsilon_{k}^{-1}$\emph{dist}%
$(\zeta_{k},\partial\Theta)\rightarrow\infty,$ and $\zeta_{k}\rightarrow\zeta$
with%
\[
\text{\emph{dist}}(\zeta,\mathbb{R}^{n-1}\times\{0\})=\text{\emph{dist}%
}(\Theta,\mathbb{R}^{n-1}\times\{0\}),
\]

\item[(ii)] $\omega$ is $\phi$-equivariant and has minimal energy among all
nontrivial $\phi$-equivariant solutions to the problem \emph{(\ref{limprob})},

\item[(iii)] $v_{\delta_{k},\ast}=\widetilde{\omega}_{\varepsilon_{k}%
,\zeta_{k}}+o(1)$ in $D^{1,2}(\mathbb{R}^{N}),$ where%
\[
\widetilde{\omega}_{\varepsilon_{k},\zeta_{k}}(y,z):=\varepsilon_{k}%
^{(2-n)/2}\omega\left(  \frac{(y,\left\vert z\right\vert )-\zeta_{k}%
}{\varepsilon_{k}}\right)  .
\]

\end{enumerate}

Moreover, $\lambda_{\ast}<0$ if $m\geq2.$
\end{theorem}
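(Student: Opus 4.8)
The plan is to reduce everything to the critical anisotropic problem $(\wp_{\delta,2_n^{\ast}}^{\#})$ on $\Theta_{\delta}$ and apply the concentration analysis of Theorem~\ref{thm:concentration} (in its sharpened form given by Corollary~\ref{cor:concentration}, valid since every $\Gamma$-orbit in $\Theta\smallsetminus\Theta^{\Gamma}$ has positive dimension), together with the nonexistence Theorem~\ref{thm:cf} and the level-continuity Lemma~\ref{lem:delta_to_0}. First I would observe, via the straightforward change of variables $v_{\delta,\ast}(y,z)=u_{\delta,\ast}(y,|z|)$ recalled just before the statement, that the least energy solution $v_{\delta,2_{N,m}^{\ast}}$ of $(\wp_{\delta,2_{N,m}^{\ast}})$ in $\Omega_{\delta}$ corresponds to a least energy solution $u_{\delta,\ast}$ of $(\wp_{\delta,2_n^{\ast}}^{\#})$ on $\Theta_{\delta}$, with $2_{N,m}^{\ast}=2_n^{\ast}$ for $n=N-m$ and with the weights $a(x)=c(x)=x_n^m$, $b(x)=\lambda x_n^m$; and that $\|v_{\delta,\ast}\|_{D^{1,2}(\mathbb{R}^N)}^2$ is, up to the fixed constant from integrating over the sphere $\mathbb{S}^m$, equal to $\|u_{\delta,\ast}\|_{a,b}^2$. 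So it suffices to establish the analogue of (i)--(iii) for $u_{\delta,\ast}$.

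Next I would take $\lambda_{\ast}\in(-\lambda_1^{\phi},0]$ as in Theorem~\ref{thm:cf}, so that for $\lambda\in(\lambda_{\ast},\infty)\cup\{0\}$ the critical problem $(\wp_{0,2_n^{\ast}}^{\#})$ on the full domain $\Theta$ has no least energy solution. The argument is then a contradiction/compactness dichotomy. Extending each $u_{\delta,\ast}$ by zero outside $\Theta_{\delta}$, we have $u_{\delta,\ast}\in\mathcal{N}_{\ast}^{\phi}$ with $J_{\ast}(u_{\delta,\ast})=\ell_{\delta,2_n^{\ast}}^{\phi}\to\ell_{\ast}^{\phi}$ as $\delta\to0$, by Lemma~\ref{lem:delta_to_0}. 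Choosing any sequence $\delta_k\to0$, the sequence $(u_{\delta_k,\ast})$ is a minimizing sequence for $J_{\ast}$ on $\mathcal{N}_{\ast}^{\phi}$, so Corollary~\ref{cor:concentration} applies: along a subsequence, either (1) $u_{\delta_k,\ast}\to u$ strongly in $D_0^{1,2}(\Theta)$ with $u$ a minimizer of $J_{\ast}$ on $\mathcal{N}_{\ast}^{\phi}$, or (2) the blow-up alternative holds, providing $\zeta_k\in\Theta^{\Gamma}$, $\varepsilon_k\in(0,\infty)$ and a nontrivial $\phi$-equivariant solution $\omega$ to~(\ref{limprob}) with $J_{\infty}(\omega)=\ell_{\infty}^{\phi}$, $\varepsilon_k^{-1}\mathrm{dist}(\zeta_k,\partial\Theta)\to\infty$, $\zeta_k\to\zeta\in\overline{\Theta}$, the profile convergence in (2) of the corollary, and
\[
\frac{a(\zeta)^{n/2}}{c(\zeta)^{(n-2)/2}}=\min_{x\in\overline{\Theta^{\Gamma}}}\frac{a(x)^{n/2}}{c(x)^{(n-2)/2}}.
\]
Since $a=c=x_n^m$, the left side is $\zeta_n^m$ and the minimum forces $\zeta_n=\mathrm{dist}(\Theta^{\Gamma},\mathbb{R}^{n-1}\times\{0\})$, which by the hypothesis equals $\mathrm{dist}(\Theta,\mathbb{R}^{n-1}\times\{0\})$; this is exactly (i). Translating the profile convergence of the corollary through the change of variables and the constant $(a(\zeta)/c(\zeta))^{(n-2)/4}=1$ gives (iii) with $\omega$ in place of the displayed $\widetilde{\omega}$, and the energy minimality $J_{\infty}(\omega)=\ell_{\infty}^{\phi}$ among $\phi$-equivariant solutions is (ii). So the theorem follows once we exclude alternative (1).

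The main obstacle is precisely ruling out case (1): if $u_{\delta_k,\ast}\to u$ strongly, then $u\in\mathcal{N}_{\ast}^{\phi}$ with $J_{\ast}(u)=\ell_{\ast}^{\phi}$, i.e.\ $u$ is a least energy solution of $(\wp_{0,2_n^{\ast}}^{\#})$ on the full domain $\Theta$ --- contradicting Theorem~\ref{thm:cf}. This is where the hypothesis $\mathrm{dist}(\Theta^{\Gamma},\mathbb{R}^{n-1}\times\{0\})=\mathrm{dist}(\Theta,\mathbb{R}^{n-1}\times\{0\})$, which is exactly the hypothesis of Theorem~\ref{thm:cf}, is used. (One small point to check is that the $u$ produced in case (1), being a weak limit in $D_0^{1,2}(\Theta)^{\phi}$ of functions supported in $\Theta_{\delta_k}$, genuinely lies in $D_0^{1,2}(\Theta)^{\phi}$ and is a critical point of $J_{\ast}$ there, hence a bona fide solution by Lemma~\ref{lemma_crit}; this is immediate since $D_0^{1,2}(\Theta)^{\phi}$ is a closed subspace and strong convergence on $\mathcal{N}_{\ast}^{\phi}$ with $J_{\ast}'(u_{\delta_k,\ast})\to0$ in the dual passes to the limit.) Hence case (1) is impossible, case (2) must occur along a subsequence, and we relabel to obtain the sequences $(\delta_k)$, $(\varepsilon_k)$, $(\zeta_k)$ asserted. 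Finally, the statement $\lambda_{\ast}<0$ when $m\geq2$ is inherited verbatim from the corresponding assertion in Theorem~\ref{thm:cf}.
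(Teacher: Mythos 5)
Your proposal is correct and follows essentially the same route as the paper's proof: reduce to the anisotropic problem via $v_{\delta,\ast}(y,z)=u_{\delta,\ast}(y,|z|)$, use Lemma~\ref{lem:delta_to_0} to make $(u_{\delta_k,\ast})$ a minimizing sequence for $J_{\ast}$ on $\mathcal{N}_{\ast}^{\phi}$ as $\delta_k\to0$, apply Corollary~\ref{cor:concentration}, rule out the compactness alternative by Theorem~\ref{thm:cf}, and read off the location of $\zeta$ from $a=c=x_n^m$ together with the distance hypothesis. The paper's argument is simply a terser version of exactly this reasoning, including taking $\lambda_{\ast}$ (and its negativity for $m\geq2$) directly from Theorem~\ref{thm:cf}.
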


\begin{proof}
Let $\lambda_{\ast}$ be the number given by Theorem \ref{thm:cf}. Fix
$\lambda\in(\lambda_{\ast},\infty)\cup\{0\},$ and let $u_{\delta,\ast}$ be the
least energy solution to the problem $(\wp_{\delta,2_{n}^{\ast}}^{\#})$ given
by $v_{\delta,\ast}(y,z)=u_{\delta,\ast}(y,\left\vert z\right\vert ).$ Choose
a sequence $\delta_{k}\rightarrow0$ and set $u_{k}:=u_{\delta_{k},\ast}$.
Then, $u_{k}\in\mathcal{N}_{\ast}^{\phi}$ and, by Lemma \ref{lem:delta_to_0},
$J_{\ast}(u_{k})\rightarrow\ell_{\ast}^{\phi}$. It follows from Corollary
\ref{cor:concentration} and Theorem \ref{thm:cf} that, after passing a
subsequence, there exist sequences $(\varepsilon_{k})$ in $(0,\infty)$ and
$(\zeta_{k})$ in $\Theta^{\Gamma},$ and a nontrivial $\phi$-equivariant
solution $\omega$ to the limit problem (\ref{limprob}) such that $\zeta
_{k}\rightarrow\zeta,$ $\varepsilon_{k}^{-1}$dist$(\zeta_{k},\partial
\Theta)\rightarrow\infty,$ $J_{\infty}(\omega)=\ell_{\infty}^{\phi},$%
\begin{equation}
\lim_{k\rightarrow\infty}\left\Vert u_{k}-\varepsilon_{k}^{\frac{2-n}{2}%
}\omega\left(  \frac{\text{ }\cdot\text{ }-\zeta_{k}}{\varepsilon_{k}}\right)
\right\Vert =0, \label{eq:profile_u}%
\end{equation}
and%
\[
\left[  \text{dist}(\zeta,\mathbb{R}^{n-1}\times\{0\})\right]  =\min
\limits_{x\in\overline{\Theta}}\left[  \text{dist}(x,\mathbb{R}^{n-1}%
\times\{0\})\right]  .
\]
Equation (\ref{eq:profile_u}) implies that $v_{\delta_{k},\ast}$ satisfies
(3). This concludes the proof.
\end{proof}

\begin{theorem}
\label{thm:supercritical_profile}For $\delta\in(0,\delta_{0})$ and
$p\in(2_{N,m}^{\ast},2_{N,m+\mathfrak{d}}^{\ast})$ let $v_{\delta,p}$ be a
least energy solution to the problem $(\wp_{\delta,p}).$ Assume that
\[
\text{\emph{dist}}(\Theta^{\Gamma},\mathbb{R}^{n-1}\times
\{0\})=\text{\emph{dist}}(\Theta,\mathbb{R}^{n-1}\times\{0\}).
\]
Then, there exists $\lambda_{\ast}\leq0$ such that, if $\lambda\in
(\lambda_{\ast},\infty)\cup\{0\},$ there exist sequences $(\delta_{k})$ in
$(0,\delta_{0}),$ $(\varepsilon_{k})$ in $(0,\infty)$, $(p_{k})$ in
$(2_{N,m}^{\ast},2_{N,m+\mathfrak{d}}^{\ast}),$ and $(\zeta_{k})$ in
$\Theta^{\Gamma},$ and a nontrivial solution $\omega$ to the limit problem
\emph{(\ref{limprob})} such that

\begin{enumerate}
\item[(i)] $\delta_{k}\rightarrow0,$ $p_{k}\rightarrow2_{N,m}^{\ast},$
$\varepsilon_{k}^{-1}$\emph{dist}$(\zeta_{k},\partial\Theta)\rightarrow
\infty,$ and $\zeta_{k}\rightarrow\zeta$ with%
\[
\text{\emph{dist}}(\zeta,\mathbb{R}^{n-1}\times\{0\})=\text{\emph{dist}%
}(\Theta,\mathbb{R}^{n-1}\times\{0\}),
\]

\item[(ii)] $\omega$ is $\phi$-equivariant and has minimal energy among all
nontrivial $\phi$-equivariant solutions to the problem \emph{(\ref{limprob})},

\item[(iii)] $v_{\delta_{k},p_{k}}=\widetilde{\omega}_{\varepsilon_{k}%
,\zeta_{k}}+o(1)$ in $D^{1,2}(\mathbb{R}^{N}),$ where%
\[
\widetilde{\omega}_{\varepsilon_{k},\zeta_{k}}(y,z):=\varepsilon_{k}%
^{(2-n)/2}\omega\left(  \frac{(y,\left\vert z\right\vert )-\zeta_{k}%
}{\varepsilon_{k}}\right)  .
\]

\end{enumerate}

Moreover, $\lambda_{\ast}<0$ if $m\geq2.$
\end{theorem}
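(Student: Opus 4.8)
The plan is to combine the convergence statement for the critical problem (Theorem \ref{thm:critical_profile}) with the continuity of least-energy levels as $p\to 2_n^\ast$ (Proposition \ref{prop:limits}), via a diagonal argument. First I would note that, by the change of variables $v_{\delta,p}(y,z)=u_{\delta,p}(y,|z|)$, the problem $(\wp_{\delta,p})$ corresponds to the anisotropic problem $(\wp^{\#}_{\delta,p})$ with $a(x)=c(x)=x_n^m$ and $b(x)=\lambda x_n^m$, and $v_{\delta,p}$ is a least-energy solution if and only if $u_{\delta,p}$ is; moreover $\|v_{\delta,p}-\widetilde\omega_{\varepsilon,\zeta}\|_{D^{1,2}(\mathbb{R}^N)}$ is (up to a dimensional constant coming from the integration in $z$) comparable to $\|u_{\delta,p}-\varepsilon^{(2-n)/2}\omega((\cdot-\zeta)/\varepsilon)\|_{D^{1,2}_0(\Theta_\delta)}$, so it suffices to work entirely with the anisotropic problem on $\Theta_\delta\subset\mathbb{R}^n$ and produce sequences $\delta_k\to0$, $p_k\to 2_n^\ast$ from above, with $u_k:=u_{\delta_k,p_k}$ concentrating in the required way.

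The core estimate to establish is $\limsup_{p\to 2_n^{\ast+}}\ell^{\phi}_{\delta,p}\to \ell^{\phi}_\ast$ as $\delta\to0$, in a suitable joint sense. I would argue as follows: fix $\varepsilon>0$. By Lemma \ref{lem:delta_to_0} choose $\delta$ small and a function $\psi\in\mathcal{N}^{\phi}_{\delta,2_n^\ast}$ (compactly supported in $\Theta_\delta$) with $J_\ast(\psi)<\ell^{\phi}_\ast+\varepsilon/2$. Since $\psi$ is fixed with compact support in $\Theta_\delta$, it lies in $D^{1,2}_0(\Theta_\delta)^{\phi}\cap L^q$ for all $q$, and rescaling $\psi$ onto $\mathcal{N}^{\phi}_{\delta,p}$ by the factor $t_{p}=\big(\|\psi\|^2_{a,b}/|\psi|^p_{c;p}\big)^{1/(p-2)}$ gives, by Lemma \ref{lem:p_to_q} applied to the fixed bounded sequence $\psi$, that $t_p\to1$ and $J_{\delta,p}(t_p\psi)\to J_{\delta,2_n^\ast}(\psi)=J_\ast(\psi)$ as $p\to 2_n^{\ast+}$. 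Hence for $p$ close enough to $2_n^\ast$ we get $\ell^{\phi}_{\delta,p}\le J_{\delta,p}(t_p\psi)<\ell^{\phi}_\ast+\varepsilon$. Thus for each $k$ I can pick first $\delta_k\to0$, then $p_k\downarrow 2_n^\ast$ (so that also $p_k<2_{n-\mathfrak d}^\ast$, which holds automatically once $p_k$ is close to $2_n^\ast$ since $\mathfrak d>0$), such that $\ell^{\phi}_{\delta_k,p_k}\le \ell^{\phi}_\ast+1/k$.

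Now I would extract the profile. Let $u_k:=u_{\delta_k,p_k}$, extended by zero to $\Theta$; then $u_k\in D^{1,2}_0(\Theta)^{\phi}$ and $J_{p_k}(u_k)=\frac{p_k-2}{2p_k}\|u_k\|^2_{a,b}=\ell^{\phi}_{\delta_k,p_k}$ is bounded, so $(u_k)$ is bounded in $D^{1,2}_0(\Theta)^{\phi}$. Using Lemma \ref{lem:p_to_q} on this bounded sequence, $|u_k|^{p_k}_{c;p_k}$ and $|u_k|^{2_n^\ast}_{c;2_n^\ast}$ differ by $o(1)$, so $(u_k)$ is (up to rescaling by a factor tending to $1$) an almost-Nehari, almost-minimizing sequence for $J_\ast$ on $\mathcal{N}^{\phi}_\ast$: precisely, letting $s_k\to1$ be such that $s_ku_k\in\mathcal{N}^{\phi}_\ast$, one checks $J_\ast(s_ku_k)\to \ell^{\phi}_\ast$. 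Then Corollary \ref{cor:concentration} applies to $(s_ku_k)$. The hypothesis $\mathrm{dist}(\Theta^\Gamma,\mathbb{R}^{n-1}\times\{0\})=\mathrm{dist}(\Theta,\mathbb{R}^{n-1}\times\{0\})$, together with Theorem \ref{thm:cf} (choosing $\lambda_\ast$ to be the number it provides and $\lambda\in(\lambda_\ast,\infty)\cup\{0\}$), rules out alternative (1) of the corollary — the critical problem has no least-energy solution — so alternative (2) holds: there are $\zeta_k\in\Theta^\Gamma$, $\varepsilon_k\in(0,\infty)$ and a nontrivial $\phi$-equivariant $\omega$ solving \eqref{limprob} with $J_\infty(\omega)=\ell^{\phi}_\infty$, $\zeta_k\to\zeta$, $\varepsilon_k^{-1}\mathrm{dist}(\zeta_k,\partial\Theta)\to\infty$, the location constraint on $\zeta$, and $\|s_ku_k-(a(\zeta)/c(\zeta))^{(n-2)/4}\varepsilon_k^{(2-n)/2}\omega((\cdot-\zeta_k)/\varepsilon_k)\|\to0$. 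Since $a=c$ here the prefactor is $1$, and since $s_k\to1$ the same limit holds with $u_k$ in place of $s_ku_k$; that $\omega$ has minimal energy among nontrivial $\phi$-equivariant solutions is exactly $J_\infty(\omega)=\ell^{\phi}_\infty$. Translating back via $v_{\delta_k,p_k}(y,z)=u_k(y,|z|)$ gives (i)–(iii), and $\lambda_\ast<0$ when $m\ge2$ comes from the corresponding clause in Theorem \ref{thm:cf}.

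The main obstacle is the joint passage to the limit in the two parameters $\delta$ and $p$: one must be careful that the diagonal sequence simultaneously forces $\delta_k\to0$, $p_k\to 2_n^\ast$, and the energy $\ell^{\phi}_{\delta_k,p_k}\to\ell^{\phi}_\ast$, because Lemma \ref{lem:delta_to_0} is stated only for the critical exponent and Proposition \ref{prop:limits}-type continuity in $p$ is stated only for a fixed domain; the key technical point is that the test function realizing the near-infimum at level $\delta$ is fixed with compact support, so the $p$-continuity of its energy is uniform enough to interleave the two limits. A secondary point requiring care is that extending $u_k$ by zero outside $\Theta_{\delta_k}$ lands in $\mathcal{N}^{\phi}_\ast$ only after the small rescaling $s_k$, since $u_k$ sits on $\mathcal{N}^{\phi}_{\delta_k,p_k}$ for exponent $p_k\ne 2_n^\ast$; Lemma \ref{lem:p_to_q} is what makes $s_k\to1$ and keeps the energy converging to $\ell^{\phi}_\ast$.
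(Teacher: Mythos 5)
Your proposal is correct and follows essentially the same route as the paper: a diagonal choice of $(\delta_k,p_k)$ combining Lemma \ref{lem:delta_to_0} with the $p$-continuity of levels, rescaling the least-energy solutions onto $\mathcal{N}_\ast^{\phi}$ so that they become a minimizing sequence for $J_\ast$, and then invoking Corollary \ref{cor:concentration} together with the nonexistence result of Theorem \ref{thm:cf} to force the concentration alternative. The only cosmetic difference is that you re-derive the needed continuity in $p$ (the upper bound on $\ell^{\phi}_{\delta,p}$ via a fixed compactly supported test function and the almost-Nehari rescaling $s_k\to1$) directly from Lemma \ref{lem:p_to_q}, whereas the paper simply cites Proposition \ref{prop:limits}, whose proof is exactly that argument.
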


\begin{proof}
Let $\lambda_{\ast}$ be the number given by Theorem \ref{thm:cf}. Fix
$\lambda\in(\lambda_{\ast},\infty)\cup\{0\}.$ Let $u_{\delta,p}$ be the least
energy solution to the problem $(\wp_{\delta,p}^{\#})$ given by $v_{\delta
,p}(y,z)=u_{\delta,p}(y,\left\vert z\right\vert )$ and let $t_{\delta,p}%
\in(0,\infty)$ be such that $\widetilde{u}_{\delta,p}:=t_{\delta,p}%
u_{\delta,p}\in\mathcal{N}_{\delta,2_{n}^{\ast}}^{\phi}\subset\mathcal{N}%
_{\ast}^{\phi}.$ Proposition \ref{prop:limits} and\ Lemma \ref{lem:delta_to_0}
allow us to choose $\delta_{k}\in(0,\delta_{0})$ and $p_{k}\in(2_{n}^{\ast
},2_{n-\mathfrak{d}}^{\ast})$ such that $\delta_{k}\rightarrow0,$
$p_{k}\rightarrow2_{n}^{\ast},$ and $J_{\ast}(\widetilde{u}_{k})\rightarrow
\ell_{\ast}^{\phi}$, where $\widetilde{u}_{k}:=\widetilde{u}_{\delta_{k}%
,p_{k}}.$ The rest of the proof is the same as that of Theorem
\ref{thm:critical_profile}
\end{proof}

Finally, we derive Theorems \ref{thm:main1}\ and \ref{thm:main2}\ from
Theorems \ref{thm:main_existence} and \ref{thm:supercritical_profile}.

\begin{proof}
[Proof of Theorem \ref{thm:main1}]Let $\Gamma:=O(n-1)$ and $\phi$ be the
trivial homomorphism $\phi\equiv1.$ Then, $B^{\Gamma}=B\cap\left[
\{0\}\times(0,\infty)\right]  .$ A $\phi$-equivariant function is simply a
$\Gamma$-invariant function and, as the standard bubble is radial, it is the
least energy $\Gamma$-invariant solution to the problem (\ref{limprob}), which
is unique up to translations and dilations. Since $\dim(\Gamma x)=n-2\geq1$
for every $x\in B\smallsetminus B^{\Gamma},$ applying Theorems
\ref{thm:main_existence} and \ref{thm:supercritical_profile} to $\Theta:=B$
with this group action we obtain Theorem \ref{thm:main1}.
\end{proof}

\begin{proof}
[Proof of Theorem \ref{thm:main2}]For $n\geq5,$ let $\Gamma$ be the subgroup
of $O(n-1)$ generated by $\{\mathrm{e}^{\mathrm{i}\vartheta},\alpha
,\tau:\vartheta\in\lbrack0,2\pi),$ $\alpha\in O(n-5)\}$ acting on a point
$y=(\eta,\xi)\in\mathbb{C}^{2}\times\mathbb{R}^{n-5}\equiv\mathbb{R}^{n-1},$
$\eta=(\eta_{1},\eta_{2})\in\mathbb{C}\times\mathbb{C}$, as
\[
\mathrm{e}^{\mathrm{i}\vartheta}y:=(\mathrm{e}^{\mathrm{i}\vartheta}\eta
,\xi),\text{\qquad}\alpha y:=(\eta,\alpha\xi),\text{\qquad}\tau y:=(-\overline
{\eta}_{2},\overline{\eta}_{1},\xi),
\]
and let $\phi$ be the homomorphism given by $\phi(\mathrm{e}^{\mathrm{i}%
\vartheta})=1=\phi(\alpha)$ and $\phi(\tau)=-1.$ Then, $B^{\Gamma}%
=B\cap\left[  \{0\}\times(0,\infty)\right]  .$ If $n=5$ then $\dim\left(
\Gamma y\right)  =1$ for every $y\in\mathbb{R}^{n-1}\smallsetminus\{0\},$
whereas for $n\geq6$ we have that%
\[
\dim\left(  \Gamma y\right)  =\left\{
\begin{array}
[c]{ll}%
n-5 & \text{if \ }\eta\neq0\text{ and }\xi\neq0,\\
1 & \text{if \ }\xi=0,\\
n-6 & \text{if \ }\eta=0.
\end{array}
\right.
\]
Therefore, if $n=5$ or $n\geq7,$ we have that $\dim(\Gamma x)\geq1$ for every
$x\in B\smallsetminus B^{\Gamma}.$ Notice that any point $x_{0}=(\eta,\xi)\in
B$ with\ $\eta\neq0$ satisfies condition (\ref{A})$.$ Hence, Theorem
\ref{thm:main2} follows from Theorems \ref{thm:main_existence} and
\ref{thm:supercritical_profile}.
\end{proof}

\end{document}